\definecolor{ForestGreen}{rgb}{0.13, 0.55, 0.13}
\newtheorem{theorem}{Theorem}[section] 
\newtheorem*{theorem*}{Main Result}
\newtheorem{thm*}{Known result}
\newtheorem{corollary}[theorem]{Corollary} 
\newtheorem{proposition}[theorem]{Proposition}
\newtheorem{definition}[theorem]{Definition}
\theoremstyle{definition}
\theoremstyle{remark}
\newtheorem{remark}[theorem]{Remark}
\numberwithin{equation}{section} 
\newcommand{\eqlab}[1]{\begin{equation}  \begin{aligned}#1 \end{aligned}\end{equation}} %this substitutes \begin{equation}\begin{aligned} ... 
\newcommand{\bgs}[1]{\begin{equation*} \begin{aligned}#1\end{aligned}\end{equation*}} %this substitutes \begin{equation*}\begin{aligned} ...
\newcommand{\sys}[2][]{\begin{equation*}#1  \left\{\begin{aligned}#2\end{aligned}\right.\end{equation*}}%this substitutes \begin{equation*}\begin{aligned} ... with the graph parenthesis 
\DeclareMathOperator{\tail}{Tail}
\newcommand{\R}{\ensuremath{\mathbb{R}}}
\newcommand{\Rn}{\ensuremath{\mathbb{R}^n}}
\newcommand{\N}{\ensuremath{\mathbb{N}}}
\DeclareMathOperator*{\esssup}{ess\,sup}
\DeclareMathOperator*{\essinf}{ess\,inf}
\DeclareMathOperator*{\esslimsup}{ess\,limsup}
\DeclareMathOperator*{\essliminf}{ess\,liminf}
\newcommand{\Ll}{\mathcal L}
\newcommand{\Co}{\mathcal C}
\newcommand{\E}{\mathcal E}
\newcommand{\W}{\mathcal W}
\DeclareMathOperator{\Per}{Per}
\DeclareMathOperator{\diam}{diam}
\newcommand{\eps}{\varepsilon}
\renewcommand{\le}{\leqslant}
\renewcommand{\leq}{\leqslant}
\renewcommand{\ge}{\geqslant}
\renewcommand{\geq}{\geqslant}
\renewcommand{\emptyset}{\varnothing}
\DeclareMathOperator{\PV}{\mbox{\normalfont P.V.}}
\title[Continuity of $s$-minimal functions]{Continuity of $s$-minimal functions}
\begin{document}
\author[C. Bucur]{Claudia Bucur}
\address[C.\ Bucur]{\newline\indent Dipartimento di Matematica Federigo Enriques
	\newline\indent
	Universit\`a degli Studi di Milano
	\newline\indent
	Via Saldini 50, 20133 Milano, Italia}
\email{\href{mailto:claudia.bucur@unimi.it}{claudia.bucur@unimi.it}} 

\author[S. Dipierro]{Serena Dipierro}
\address[S.\ Dipierro]{\newline\indent Department of Mathematics
and Statistics
\newline\indent
University of Western Australia
\newline\indent
35 Stirling Hwy, Crawley WA 6009, Australia}
\email{\href{mailto:serena.dipierro@uwa.edu.au}{serena.dipierro@uwa.edu.au}}

\author[L. Lombardini]{Luca Lombardini}
\address[L.\ Lombardini]{\newline\indent
Institut f\"ur Analysis und Scientific Computing 
\newline\indent
Technische Universit\"at Wien
\newline\indent
Wiedner Hauptstra{\ss}e 8--10, 1040 Vienna, Austria}
\email{\href{mailto:luca.lombardini@tuwien.ac.at}{luca.lombardini@tuwien.ac.at}}
\author[E. Valdinoci]{Enrico Valdinoci}
\address[E.\ Valdinoci]{\newline\indent Department of Mathematics
and Statistics
\newline\indent
University of Western Australia
\newline\indent
35 Stirling Hwy, Crawley WA 6009, Australia}
\email{\href{mailto:enrico.valdinoci@uwa.edu.au}{enrico.valdinoci@uwa.edu.au}}

\thanks{CB has been supported by INdAM --  GNAMPA, project E53C23001670001. LL acknowledges the support of the Austrian Science Fund (FWF) through grants 10.55776/F65 and 10.55776/Y1292}
%\keywords{Nonlocal equations, fractional $1$-Laplacian, fractional Cheegar sets, fractional $p$-Laplacian, asymptotic behavior of solutions.}
%\subjclass{Primary 35R11, 35B40, 35J60, Secondary 35D30, 58E12}

\begin{abstract}
We consider the minimization property of a Gagliardo-Slobodeckij seminorm which can be seen as the fractional counterpart of the classical problem of functions of least gradient and which is related to the minimization of the nonlocal perimeter functional.

We discuss continuity properties for this kind of problem. In particular, we show that, under natural structural assumptions, the minimizers are bounded and continuous in the interior of the ambient domain (and, in fact, also continuous up to the boundary under some mild additional hypothesis).

We show that these results are also essentially optimal, since in general the minimizer is not necessarily continuous across the boundary.
\end{abstract}
\maketitle

\section{Introduction}

The most intensively studied topic in the calculus of variations is probably the minimization of a functional in a suitable~$L^p$-type space. In this line of research, the case~$p=1$ is customarily ``exceptional'', due to the peculiar functional properties of~$L^1$-type spaces.

A framework of special interest is that provided by the ``functions of least gradient'', i.e. by the functions~$u$ minimizing the total variation of the vector-valued measure~$\nabla u$. This setting is important for at least two reasons: on the one hand, it provides the appropriate $L^1$-type functional framework for Dirichlet energies by formulating the question in the bounded variation sense; on the other hand, it relates the problem to another classical one, namely the theory of parametric minimal surfaces (the connection arising from to the coarea formula). See in particular~\cite{MR1246349, SWZ, J, mazcl} for classical results on functions of least gradient and their connections with minimal surfaces.

The recent literature has also considered the fractional counterpart of this type of problems, in the setting of a suitable minimization of a Gagliardo-Slobodeckij seminorm,
see in particular~\cite{bdlv, bdlvm}. These problems are relevant also in view of their connections with the nonlocal minimal surfaces introduced by Caffarelli, Roquejoffre, and Savin in~\cite{nms}: indeed, as established in~\cite{bdlv}, a function is a minimizer for the fractional Gagliardo-Slobodeckij seminorm of~$W^{s,1}$-type (with~$s\in(0,1)$) if and only if its level sets are minimizers for the corresponding fractional perimeter.
\medskip

The goal of this article is to continue the investigation of the minimizers of the $W^{s,1}$-type seminorm and prove (or disprove) their continuity and check if they satisfy some comparison principle.

In the classical case, these results were obtained in~\cite{MR1246349, SWZ}. The fractional framework is however structurally different, since calibration methods are at the moment not available, and the comparison principle for fractional minimal surfaces is a delicate issue which has been established only very recently (see~\cite{seo23}). Besides, fractional minimal surfaces are highly sensitive to their data ``at infinity'' in terms of their ``boundary stickiness'' features (see~\cite{boundary, graph, sticki, seograph, stickscrelle}) and this special characteristic also influences the type of assumptions required at the level of minimization of the Gagliardo-Slobodeckij seminorm. \medskip

The mathematical setting in which we work is the following.
Given~$s\in (0,1)$, let~$\Omega\subset \Rn$ be a bounded open set with Lipschitz boundary and define
\[
	 \W^{s,1}(\Omega):= \big\{ u\colon \Rn \to \R \; {\mbox{ s.t. }} \; u|_\Omega \in W^{s,1}(\Omega) \big\} \]
and, for~$\varphi \colon \Co \Omega \to \R$, 
  \[  \W_\varphi^{s,1}(\Omega):= \big\{ u\colon \Rn \to \R \; {\mbox{ s.t. }} \; u \in \W^{s,1}(\Omega), \; u=\varphi \mbox{ in } \Co \Omega \big\}.
 \]
Here above and in what follows, we use the notation~$\Co \Omega$
to denote the complement of~$\Omega$, namely~$\Co \Omega:=\R^n\setminus\Omega$.
Also, for every~$r>0$, we set
$$
\Omega_r:=\{x\in\Rn\; {\mbox{ s.t. }}\;\textrm{dist}(x,\Omega)<r\}.
$$
We also denote by~$Q(\Omega):=\R^{2n}\setminus(\Co \Omega)^2$.

Our main object of interest here is the set of minimizers in~$\W^{s,1}(\Omega)$, according to the following terminology:

\begin{definition}\label{defn11}
	We say that~$u\in \W^{s,1}(\Omega)$ is an $s$-minimal function in~$\Omega$ if
	\begin{equation}\label{GH2}
	\iint_{Q(\Omega)} \big(|u(x) -u(y)|-|v(x)  - v(y)|\big) 
	\frac{dx\, dy}{|x-y|^{n+s}} \leq 0,
	\end{equation}
	for any competitor~$v\in \W^{s,1}(\Omega)$ such that~$u=v$ almost everywhere in~$\Co \Omega$.
	\end{definition}
	
	In this framework, our first result states that $s$-minimal functions are bounded and continuous:
	
\begin{theorem}\label{exlev}
There exists~$\Theta = \Theta(n,s)>1$ such that the following statement holds true.

If~$\Omega\subset \Rn$ is a bounded open set with Lipschitz boundary such that
\eqlab{ \label{connect} \mbox{the set } \; 
 \overline{\Omega_D}\setminus \Omega  \mbox{ is connected} ,}
 for some~$D>\Theta \diam(\Omega)$, and
if $\varphi \colon \Co \Omega \to \R$ is such that 
%is such that
%\[ \int_{\Omega} \left[ \int_{\Omega_{\Theta d} \setminus \Omega} \frac{|\varphi(y)|}{|x-y|^{n+s}} \, dy \right] dx <+\infty\] 
%and 
	\begin{equation}\label{PHIA}
		\varphi \in  
		C(\overline{\Omega_{D} }\setminus \Omega),
	 \end{equation}
	 then any $s$-minimal function~$u\in \W^{s,1}_\varphi(\Omega)$ belongs to~$ C(\Omega)\cap L^\infty(\Omega)$. 
	 \end{theorem}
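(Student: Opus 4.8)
\emph{Sketch of proof and plan.}
The plan is to pass to the superlevel sets of~$u$. One should first notice that truncating~$u$ inside~$\Omega$ need not produce an admissible competitor, since the datum~$\varphi$ may be unbounded far from~$\Omega$; hence a direct truncation argument fails and the geometric structure must be used. By the characterization of $s$-minimal functions in~\cite{bdlv}, together with a coarea-type identity for the Gagliardo--Slobodeckij functional, for a.e.\ $t\in\R$ the superlevel set~$E_t:=\{u>t\}$ is an $s$-minimal set in~$\Omega$ with exterior datum~$\{\varphi>t\}\cap\Co\Omega$; by monotonicity of~$t\mapsto E_t$ one fixes a representative for every~$t$.

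The first step will be a boundedness lemma, which also powers the end of the continuity proof: \emph{if $t_0\in\R$ satisfies $\varphi\le t_0$ on $\overline{\Omega_D}\setminus\Omega$, then $u\le t_0$ a.e.\ in $\Omega$.} To prove it, fix~$t>t_0$; since~$\varphi$ is continuous on the compact set~$\overline{\Omega_D}\setminus\Omega$ and~$\le t_0<t$ there, the exterior datum of~$E_t$ is contained in~$\Co\Omega_D$. Testing the minimality of~$E_t$ against the competitor~$E_t\setminus\Omega$ and expanding the nonlocal perimeter in terms of the interactions~$L(A,B):=\iint_{A\times B}|x-y|^{-n-s}\,dx\,dy$, everything fixed by the constraint cancels and one is left with~$L(A,\Omega_D\setminus\Omega)\le L(A,\Co\Omega_D)$, where~$A:=E_t\cap\Omega$. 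For~$x\in\Omega$ the points of~$\Co\Omega_D$ lie at distance~$\ge D$ from~$x$, so~$\int_{\Co\Omega_D}|x-y|^{-n-s}\,dy\le c(n,s)\,D^{-s}$; on the other hand, comparing~$\Omega_D\setminus\Omega$ with an annulus centred at a fixed point of~$\Omega$ gives~$\int_{\Omega_D\setminus\Omega}|x-y|^{-n-s}\,dy\ge c'(n,s)\,(\diam\Omega)^{-s}$ whenever~$D\ge\Theta\diam(\Omega)$ with~$\Theta$ large. Choosing~$\Theta=\Theta(n,s)$ so large that~$c(n,s)\Theta^{-s}<c'(n,s)$ forces~$|A|=0$. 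Applying this with~$t_0:=\|\varphi\|_{L^\infty(\overline{\Omega_D}\setminus\Omega)}$ (finite, the shell being compact) and, symmetrically, to~$-u$, yields~$u\in L^\infty(\Omega)$.

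For the continuity, suppose by contradiction that~$u$ has no continuous representative in~$\Omega$; then there is~$x_0\in\Omega$ with~$a:=\essliminf_{x\to x_0}u<\esslimsup_{x\to x_0}u=:b$ (both finite, as~$u\in L^\infty$). By definition of essential liminf and limsup, for every~$t\in(a,b)$ one has~$|E_t\cap B_r(x_0)|>0$ and~$|B_r(x_0)\setminus E_t|>0$ for all small~$r>0$, so by the density estimates for $s$-minimal sets~\cite{nms} the point~$x_0$ lies on the (measure-theoretic) boundary of every~$E_t$, $t\in(a,b)$. Picking~$t_1<t_2$ in~$(a,b)$ at which both~$E_{t_1}$ and~$E_{t_2}$ are $s$-minimal, they are nested, $E_{t_2}\subseteq E_{t_1}$, and their boundaries both pass through~$x_0\in\Omega$. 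This is the step I expect to be the main obstacle: converting the qualitative ``boundaries meeting at~$x_0$'' into the quantitative~$E_{t_1}=E_{t_2}$. Here the recent comparison principle for nonlocal minimal surfaces~\cite{seo23} is indispensable; upgraded to a global statement by the nonlocal nature of the Euler--Lagrange equation (coincidence near a boundary point propagates to all of~$\R^n$), it gives~$E_{t_1}=E_{t_2}$ up to a null set. Letting~$t_1\downarrow a$ and~$t_2\uparrow b$ along admissible values we conclude~$|\{a<u<b\}|=0$, hence in particular~$\varphi\notin(a,b)$ for a.e.\ point of~$\Co\Omega$. One must be careful throughout about the merely measure-theoretic sense in which~$x_0$ sits on~$\partial E_t$ and about selecting good levels~$t$ (a set of full measure suffices, by monotonicity of~$t\mapsto E_t$).

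Finally one brings in the structural hypotheses. Since~$\varphi$ is continuous, the set~$\{\varphi\notin(a,b)\}$ is closed in~$\overline{\Omega_D}\setminus\Omega$ and has full measure there, and as every nonempty relatively open subset of this shell has positive Lebesgue measure it must equal~$\overline{\Omega_D}\setminus\Omega$; thus~$\varphi$ avoids~$(a,b)$ \emph{everywhere} on the shell. By~\eqref{connect} the shell is connected, so~$\varphi(\overline{\Omega_D}\setminus\Omega)$ is an interval contained in~$(-\infty,a]\cup[b,+\infty)$: either~$\varphi\le a$ on the whole shell, or~$\varphi\ge b$ on the whole shell. In the first case the boundedness lemma applied with any~$t_0\in(a,b)$ gives~$u\le t_0$ a.e.\ in~$\Omega$, and letting~$t_0\downarrow a$ we obtain~$u\le a$ a.e.\ in~$\Omega$, contradicting~$\esslimsup_{x\to x_0}u=b$; in the second case the lemma applied to~$-u$ gives~$u\ge b$ a.e.\ in~$\Omega$, contradicting~$\essliminf_{x\to x_0}u=a$. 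Either way we reach a contradiction, so~$u\in C(\Omega)$, completing the proof. Note that the quantitative hypothesis~$D>\Theta\diam(\Omega)$ is used only in the boundedness lemma, while~\eqref{connect} enters only through the final dichotomy.
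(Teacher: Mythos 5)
Your argument is correct and follows essentially the same route as the paper's: pass to the superlevel sets via \cite[Theorem~1.3]{bdlv}, apply the strong maximum principle of \cite{seo23} to two nested $s$-minimal sets sharing the measure-theoretic boundary point $x_0$, and derive a contradiction from the continuity of $\varphi$ on the connected shell $\overline{\Omega_D}\setminus\Omega$. The only cosmetic differences are that you re-derive the $L^\infty$ bound through an explicit competitor/interaction estimate where the paper cites \cite[Theorem~4.4]{bdlv}, and that you phrase the endgame as a dichotomy ($\varphi\leq a$ or $\varphi\geq b$ on the whole shell, then reapply the boundedness lemma) rather than as the impossibility of two distinct levels in $(\inf\varphi,\sup\varphi)$ having coinciding superlevel sets (Remark~\ref{kk2}); both versions are sound.
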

	 
	 The continuity up to the boundary of the domain is also established separately, according to the following result:
	 
	 \begin{theorem}\label{upb}
	Let~$\Omega \subset\R^n$ be a bounded open set with boundary of class~$C^2$ and~$u$ be an $s$-minimal function in~$\Omega$. 
	
If~$u|_{\Omega}\in C(\Omega)\cap L^\infty(\Omega)$
and~$u = \varphi$ almost everywhere in~$\Co \Omega$, with~$\varphi:  \Co \Omega \to \R$ such that~$\varphi\in C(\Omega_\delta\setminus\Omega)$ for some~$\delta>0$, then~$u|_{\Omega}$ can be extended to a function~$\bar{u} \in C(\overline{\Omega})$.
\end{theorem}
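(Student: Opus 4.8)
The plan is to reduce the statement to a boundary property of the superlevel sets of~$u$ — which, being level sets of an $s$-minimal function, are nonlocal minimal surfaces — and then to control their behaviour near the $C^2$ boundary. First I would invoke the level-set characterization of~\cite{bdlv}: since $u$ is $s$-minimal in~$\Omega$, for a.e.\ $t\in\R$ the superlevel set $E_t:=\{u>t\}$ is a minimizer of the fractional perimeter $\Per_s(\,\cdot\,;\Omega)$ among all measurable sets coinciding with~$E_t$ outside~$\Omega$, its exterior datum being $\{\varphi>t\}\cap\Co\Omega$ up to a null set (here I use that $u=\varphi$ a.e.\ in~$\Co\Omega$). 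Since $\Per_s(\,\cdot\,;\Omega)$ is invariant under taking complements, $\Co E_t=\{u\le t\}$ is also an $s$-minimal set in~$\Omega$, with exterior datum $\{\varphi\le t\}\cap\Co\Omega$. Finally, since $u|_\Omega$ is continuous, each $E_t\cap\Omega$ is open, so if $E_t\cap B_r(x_0)$ is Lebesgue-negligible for some $x_0\in\partial\Omega$ then $E_t\cap\Omega\cap B_r(x_0)=\emptyset$, i.e.\ $u\le t$ throughout $\Omega\cap B_r(x_0)$.

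The heart of the matter — and the step I expect to be the main obstacle — is the following boundary clean-ball lemma: if $\Omega$ has $C^2$ boundary, $x_0\in\partial\Omega$, and $E$ is a minimizer of $\Per_s(\,\cdot\,;\Omega)$ such that $E\cap\Co\Omega\cap B_\rho(x_0)$ is negligible for some $\rho>0$, then $E\cap B_{\rho'}(x_0)$ is negligible for some $\rho'=\rho'(\rho,n,s,\Omega)>0$. To prove it I would first localize~$E$ as a minimizer of $\Per_s(\,\cdot\,;\Omega\cap B_\rho(x_0))$, and then exploit the exterior ball condition at~$x_0$ (with radius controlled by the $C^2$ character of $\partial\Omega$): since $\Co\Omega\cap B_\rho(x_0)\subset\Co E$, the set $\Co E$ occupies near~$x_0$ a region of definite positive density, which makes it energetically very costly for~$E$ to carry any mass close to~$x_0$. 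Comparing~$E$ with the competitors $E\setminus B_r(x_0)$ for $r\in(0,\rho)$ and running the De Giorgi–type iteration of Caffarelli–Roquejoffre–Savin~\cite{nms}, adapted to the boundary point by means of the exterior ball, yields that the density $|E\cap B_r(x_0)|/r^n$ decays as $r\to0$, and together with the density estimates for $s$-minimal sets this forces~$E$ to be void in a full neighbourhood of~$x_0$ inside~$\Omega$. This is precisely the mechanism underlying the boundary stickiness of nonlocal minimal surfaces, cf.~\cite{boundary}; the delicate point is to make the iteration bottom out at zero rather than at a positive density, and to keep track of the dependence of~$\rho'$ on the data.

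Granted the lemma, I would conclude by a two-sided comparison. Fix $x_0\in\partial\Omega$ and set $M^+:=\limsup_{x\to x_0,\,x\in\Omega}u(x)$ and $M^-:=\liminf_{x\to x_0,\,x\in\Omega}u(x)$, both finite since $u|_\Omega\in L^\infty(\Omega)$. If $M^+>\varphi(x_0)$, pick $c\in(\varphi(x_0),M^+)$ for which $E_c$ is $s$-minimal (possible, since this holds for a.e.\ level). By continuity of~$\varphi$ on $\Omega_\delta\setminus\Omega$ there is $\rho\in(0,\delta)$ with $\varphi<c$ on $(\Omega_\delta\setminus\Omega)\cap B_\rho(x_0)$, hence the exterior datum of $E_c$ is negligible in $B_\rho(x_0)\cap\Co\Omega$; the lemma provides $\rho'>0$ with $E_c\cap B_{\rho'}(x_0)$ negligible, so $u\le c$ on $\Omega\cap B_{\rho'}(x_0)$, whence $M^+\le c$, a contradiction. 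Thus $M^+\le\varphi(x_0)$. Applying the same argument to the $s$-minimal sets $\Co E_c=\{u\le c\}$ (whose exterior datum is negligible near~$x_0$ as soon as $c<\varphi(x_0)$) gives $M^-\ge\varphi(x_0)$. Therefore $\varphi(x_0)\le M^-\le M^+\le\varphi(x_0)$, i.e.\ $\lim_{x\to x_0,\,x\in\Omega}u(x)=\varphi(x_0)$ for every $x_0\in\partial\Omega$.

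Finally, define $\bar u:=u$ on~$\Omega$ and $\bar u:=\varphi$ on~$\partial\Omega$; note that $\partial\Omega\subset\Omega_\delta\setminus\Omega$, so $\varphi|_{\partial\Omega}$ is continuous. Continuity of~$\bar u$ at interior points is part of the hypotheses; at a point $x_0\in\partial\Omega$, given $\eps>0$, the previous step yields $r_1>0$ with $|u-\varphi(x_0)|<\eps$ on $\Omega\cap B_{r_1}(x_0)$, while continuity of $\varphi|_{\partial\Omega}$ yields $r_2>0$ with $|\varphi-\varphi(x_0)|<\eps$ on $\partial\Omega\cap B_{r_2}(x_0)$, so that $|\bar u-\bar u(x_0)|<\eps$ on $\overline\Omega\cap B_{\min\{r_1,r_2\}}(x_0)$. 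Hence $\bar u\in C(\overline\Omega)$, which proves the theorem (and in fact shows $\bar u=\varphi$ on $\partial\Omega$, so the extension attains the exterior datum).
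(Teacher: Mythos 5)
Your argument aims at a conclusion that is strictly stronger than Theorem~\ref{upb} and is in fact false: in your third and fourth paragraphs you derive $\lim_{\Omega\ni x\to x_0}u(x)=\varphi(x_0)$ for every $x_0\in\partial\Omega$, i.e.\ continuity \emph{across} the boundary. The theorem only asserts that $u|_\Omega$ admits some continuous extension to $\overline\Omega$, not that this extension agrees with $\varphi$, and Theorem~\ref{LACK} of this very paper shows that agreement can fail. The culprit is your ``boundary clean-ball lemma'', which is false. Take the configuration of Theorem~\ref{LACK} with $s$ small: there $u_s\equiv 0$ in $\Omega$, and the set $E:=\{u_s<1/2\}=\Co\{u_s\geq 1/2\}$ is $s$-minimal in $\Omega$ (because $\Per_s(E,\Omega)=\Per_s(\Co E,\Omega)$, so complements of $s$-minimal sets are $s$-minimal). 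Its exterior datum $\{\varphi<1/2\}\cap\Co\Omega$ is negligible in $B_\rho(x_0)\cap\Co\Omega$ for $x_0=(0,0)\in\partial\Omega$ and $\rho$ small, since $\varphi$ is continuous with $\varphi(0,0)=1$; yet $E\supset\Omega$, so $|E\cap B_{\rho'}(x_0)|>0$ for every $\rho'>0$. The heuristic you invoke --- that a definite density of $\Co E$ near $x_0$ makes it costly for $E$ to carry mass there --- is precisely what nonlocality defeats: the far-away part of the exterior datum can force the minimizer to fill $\Omega$ all the way up to $x_0$ even though the nearby exterior datum is empty. This is the stickiness phenomenon of \cite{boundary}, which works against, not for, your lemma. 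With the lemma gone, the two-sided comparison collapses, and nothing in your argument shows that $M^-=M^+$ in the genuinely problematic case, namely when the inner $\liminf$ and $\limsup$ at $x_0$ are distinct and at least one of them differs from $\varphi(x_0)$.

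For contrast, the paper never compares $u$ with the value $\varphi(x_0)$; it assumes $\ell^-(x_0)<\ell^+(x_0)$ and uses the continuity of $\varphi$ only to produce a one-sided inclusion $B_\varrho(x_0)\setminus\Omega\subset E_{t_2}\subset E_{t_1}$ for two levels $t_1<t_2$ between $\ell^-(x_0)$ and $\varphi(x_0)$. This places the level sets in the framework of the obstacle-type boundary regularity of \cite{graph} (after Caffarelli--De Silva--Savin), which yields $C^{1,\frac{1+s}{2}}$ regularity of $\partial E_{t_i}$ up to $x_0$; evaluating the Euler--Lagrange equation $H_s[E_{t_i}]=0$ at interior points of $\partial^- E_{t_i}$ accumulating at $x_0$ and passing to the limit, the difference of the two equations at $x_0$ forces $E_{t_1}=E_{t_2}$, contradicting $|E_{t_1}\setminus E_{t_2}|>0$. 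If you want to salvage your strategy, you would need to replace the clean-ball lemma by this kind of boundary regularity plus strong maximum principle argument, which is genuinely different from a density-decay iteration.
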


Gathering together Theorems~\ref{exlev} and~\ref{upb}, one obtains:

\begin{corollary}\label{contbdr}
There exists~$\Theta = \Theta(n,s)>1$ such that the following statement holds true.

If~$\Omega\subset \Rn$ is a bounded open set with~$C^2$ boundary such that~\eqref{connect} holds true, and
if~$\varphi \colon \Co \Omega \to \R$ is such that~\eqref{PHIA} holds true, 
then any $s$-minimal function~$u\in \W^{s,1}_\varphi(\Omega)$ belongs to~$C(\overline{\Omega})$. More precisely, $u|_\Omega\in C(\Omega)$ can be extended to a function~$\bar{u} \in C(\overline{\Omega})$.
\end{corollary}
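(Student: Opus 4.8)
The plan is to deduce Corollary~\ref{contbdr} as a direct gluing of Theorems~\ref{exlev} and~\ref{upb}, so the argument is essentially a matter of checking that the hypotheses of the former supply exactly the inputs that the latter requires. First I would fix~$\Theta=\Theta(n,s)>1$ to be the same constant produced by Theorem~\ref{exlev}; this is legitimate since the two theorems are independent and the corollary inherits its structural constant only from the first one. Then, assuming~$\Omega\subset\Rn$ is bounded, open, with~$C^2$ boundary, and that~\eqref{connect} holds for some~$D>\Theta\diam(\Omega)$, and that~$\varphi$ satisfies~\eqref{PHIA}, I would let~$u\in\W^{s,1}_\varphi(\Omega)$ be an arbitrary $s$-minimal function in~$\Omega$ and apply Theorem~\ref{exlev}. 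Note that a~$C^2$ boundary is in particular Lipschitz, so the hypotheses of Theorem~\ref{exlev} are met verbatim; hence~$u|_\Omega\in C(\Omega)\cap L^\infty(\Omega)$.

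The second step is to feed this into Theorem~\ref{upb}. Here I would check the three hypotheses of Theorem~\ref{upb}: that~$\Omega$ has~$C^2$ boundary (assumed in the corollary), that~$u$ is an $s$-minimal function in~$\Omega$ with~$u|_\Omega\in C(\Omega)\cap L^\infty(\Omega)$ (just obtained from Theorem~\ref{exlev}), and that~$u=\varphi$ a.e.\ in~$\Co\Omega$ with~$\varphi\in C(\Omega_\delta\setminus\Omega)$ for some~$\delta>0$. For the last point one simply takes~$\delta:=D$ (or any~$\delta\le D$), since~\eqref{PHIA} guarantees~$\varphi\in C(\overline{\Omega_D}\setminus\Omega)\subset C(\Omega_D\setminus\Omega)$, and~$u=\varphi$ in~$\Co\Omega$ holds by the very definition of~$\W^{s,1}_\varphi(\Omega)$. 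Theorem~\ref{upb} then yields a function~$\bar u\in C(\overline\Omega)$ extending~$u|_\Omega$, which is precisely the claim. Since~$u$ was an arbitrary $s$-minimal function in~$\W^{s,1}_\varphi(\Omega)$, the statement holds for all of them.

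I do not expect a genuine obstacle here: the corollary is by design the concatenation of the two main theorems, and the only things to verify are the implications~$C^2\Rightarrow\text{Lipschitz}$ for the boundary regularity and~$\overline{\Omega_D}\setminus\Omega\ni\text{continuity}\Rightarrow\Omega_\delta\setminus\Omega\ni\text{continuity}$ for a suitable~$\delta$, both of which are immediate. The mildest point worth a sentence is making sure that the constant~$\Theta$ in the corollary can indeed be taken to be the one from Theorem~\ref{exlev}, i.e.\ that Theorem~\ref{upb} imposes no competing lower bound on~$D$; inspection of its statement shows it only requires the existence of \emph{some}~$\delta>0$ with~$\varphi\in C(\Omega_\delta\setminus\Omega)$, with no size constraint, so no conflict arises. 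Thus the proof is a short two-line deduction, and the substance of the work lies entirely in Theorems~\ref{exlev} and~\ref{upb}.
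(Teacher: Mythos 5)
Your proposal is correct and coincides with the paper's treatment: the corollary is stated there precisely as the concatenation of Theorems~\ref{exlev} and~\ref{upb}, with no further argument needed. The hypothesis checks you perform (a $C^2$ boundary is Lipschitz, and \eqref{PHIA} supplies the $\delta$ required by Theorem~\ref{upb}) are exactly the right ones.
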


We also stress that the connectedness assumption
in~\eqref{connect} is not merely technical and cannot be removed from Theorem~\ref{exlev},
as the next result points out:

\begin{theorem}\label{pro42}
There exist a bounded domain~$\Omega$ with Lipschitz
boundary and~$s_1\in(0,1)$ such that, for all~$s\in[s_1,1)$,
there exists an~$s$-minimal function
which is not continuous %up to the boundary of
in~$\Omega$.
\end{theorem}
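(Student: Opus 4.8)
The plan is to engineer the ambient domain $\Omega$ so that its complement $\Co\Omega$ consists of two widely separated pieces, and then to force an $s$-minimal function to jump between two different constant values dictated separately by each piece of the exterior datum. Concretely, I would take $\Omega := B_R \setminus (\overline{B_1(p_-)} \cup \overline{B_1(p_+)})$, where $p_\pm$ are two points chosen so that $B_1(p_\pm)\subset B_R$ and $\dist(B_1(p_-),B_1(p_+))$ is large; thus $\Omega$ is a ball with two small holes, and $\Co\Omega = (\Rn\setminus B_R)\cup \overline{B_1(p_-)}\cup\overline{B_1(p_+)}$, which has the ``disconnected near-boundary'' feature (so that $\overline{\Omega_D}\setminus\Omega$ fails to be connected for the relevant $D$) that~\eqref{connect} rules out. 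I would then set the exterior datum $\varphi$ to be $-1$ on $\overline{B_1(p_-)}$, $+1$ on $\overline{B_1(p_+)}$, and, say, $0$ outside $B_R$ (or, more safely, take the outer datum very far away or choose $R$ large so it contributes negligibly).

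The heart of the argument is to show that for the two-hole configuration, an $s$-minimal function $u$ must take the value $-1$ in a neighborhood of $B_1(p_-)$ and the value $+1$ in a neighborhood of $B_1(p_+)$, hence cannot be continuous in $\Omega$. This is where the connection to nonlocal minimal surfaces from~\cite{bdlv} enters: the level sets $\{u>t\}$ for $t\in(-1,1)$ are minimizers of the fractional $s$-perimeter in $\Omega$ with exterior data interpolating between the two holes. For $t$ slightly above $-1$, the set $\{u>t\}$ contains a neighborhood of $B_1(p_+)$ and avoids a neighborhood of $B_1(p_-)$; and the known ``stickiness'' phenomena for nonlocal minimal surfaces (see~\cite{boundary, graph, sticki, stickscrelle}) say that when the two holes are far enough apart, the $s$-minimal surface prefers to stick to the boundary of the holes rather than form a costly connecting bridge. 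Quantitatively, I would compare two competitors for the perimeter problem: one that ``bridges'' the holes and one that collapses onto $\partial B_1(p_\pm)$. A bridge joining the two balls costs a positive amount of $s$-perimeter bounded below independently of the separation (it must cut across $\Omega$), whereas sticking to a small ball of radius $1$ costs only $O(1)$ as well, but — and this is the key — as the separation $\dist(p_-,p_+)\to\infty$ the interaction term $\iint |x-y|^{-n-s}$ between the two balls tends to $0$, so the configuration in which $\{u>t\}$ simply equals a tubular neighborhood of one hole and the other hole sits in the complement becomes strictly cheaper. Carrying this out gives, for all $s$ sufficiently close to $1$ (whence the threshold $s_1$; near $s=1$ one has the cleanest stickiness estimates and can also pass through the classical least-gradient picture of~\cite{MR1246349, SWZ} as a guide), that every minimizer has $u\equiv -1$ near $B_1(p_-)$ and $u\equiv +1$ near $B_1(p_+)$. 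Since $\Omega$ is connected, such a $u$ cannot lie in $C(\Omega)$, which is the assertion.

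For the existence of an $s$-minimal function in $\W^{s,1}_\varphi(\Omega)$ I would invoke the direct method: $\varphi$ as above lies in $\W^{s,1}(\Omega)$ (it is bounded with bounded support-interface), the functional $v\mapsto \iint_{Q(\Omega)}|v(x)-v(y)|\,|x-y|^{-n-s}\,dx\,dy$ is convex and lower semicontinuous under $L^1(\Omega)$-convergence, and a minimizing sequence is bounded in $W^{s,1}(\Omega)$ hence precompact in $L^1(\Omega)$ by the fractional Rellich theorem; truncation (replacing $v$ by $\max(-1,\min(1,v))$, which does not increase the functional) restores boundedness if needed. This yields a minimizer $u$, which by~\eqref{GH2} is $s$-minimal, and the construction above shows it is discontinuous.

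The main obstacle I anticipate is making the stickiness/energy-comparison argument fully rigorous and effective: one needs a quantitative lower bound on the $s$-perimeter cost of any set in $\Omega$ that ``separates'' $B_1(p_-)$ from $B_1(p_+)$ inside $\Omega$, uniform in the separation, together with a matching upper bound for the non-bridging competitor whose cross-interaction decays in the separation; reconciling these to produce a clean contradiction — and identifying the precise range $[s_1,1)$ for which the comparison is decisive — is the delicate part, and is presumably where one leans most heavily on the sharp boundary-behaviour results of~\cite{boundary, sticki, stickscrelle} and the recently established comparison principle of~\cite{seo23}.
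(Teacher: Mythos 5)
There is a genuine gap at the heart of your argument: the claim that every minimizer must equal $-1$ near $B_1(p_-)$ and $+1$ near $B_1(p_+)$ is asserted but never actually derived from an energy comparison. For a level $t\in(0,1]$ the exterior datum of $E_t=\{u\ge t\}$ is just the round ball $\overline{B_1(p_+)}$, and the competitor $E_t=\overline{B_1(p_+)}$ (empty trace in $\Omega$) is always admissible; your comparison only rules out the ``bridge'', not this empty trace. In fact the stickiness results you invoke point the other way: \cite[Theorem~1.7]{sticki} (used in this paper precisely for Proposition~\ref{cont} and Theorem~\ref{LACK}) is a small-$s$ statement and gives $E_t\cap\Omega=\emptyset$ for every $t>0$; by the symmetric argument on the complement, $E_t\supset\Omega$ for every $t\le 0$, and reconstructing $u(x)=\sup\{t\,:\,x\in E_t\}$ yields $u\equiv 0$ in $\Omega$ --- a function that \emph{is} continuous in $\Omega$ and only jumps across $\partial\Omega$, which is the phenomenon of Theorem~\ref{LACK}, not of Theorem~\ref{pro42}. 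Your remark that stickiness estimates are ``cleanest near $s=1$'' is also backwards: stickiness is a genuinely nonlocal effect quantified for small $s$, while as $s\to1$ the functional converges to the classical perimeter; you give no concrete mechanism that closes the argument on a range $[s_1,1)$. A further unaddressed point is that even a nontrivial level set does not by itself give discontinuity when $\varphi$ takes three values; one must either show two distinct level sets share a boundary point in $\Omega$ or force $u$ to be two-valued.

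The paper's construction supplies exactly the ingredient your round holes lack: a single \emph{non-convex} hole. It takes $K=\{x_1^2+x_2^2\le1,\ x_2\le5|x_1|\}$ (a disk with a wedge removed), $\Omega=B_2\setminus K$ and $\varphi=\chi_K$, so that $\Per(K,\R^2)>\Per(B_1,\R^2)$; by the Davila--Bourgain--Brezis--Mironescu asymptotics this persists for the $s$-perimeter when $s\in[s_1,1)$ (this is where the threshold $s_1$ genuinely comes from). An explicit three-way computation then gives $\Per_s(B_1,\Omega)<\min\{\Per_s(K,\Omega),\Per_s(B_2,\Omega)\}$, so both trivial configurations $|E\cap\Omega|=0$ and $|E\cap\Omega|=|\Omega|$ are strictly beaten and any minimizer satisfies $0<|E\cap\Omega|<|\Omega|$. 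Since $\varphi$ is a characteristic function, $u=\chi_E$ is two-valued and $\Omega$ is connected, so $u\notin C(\Omega)$ with no further analysis of where the jump occurs. If you want to salvage your two-hole picture you would have to replace at least one round hole by a set whose own $s$-perimeter exceeds that of a cheaper filling, and then run a comparison of this quantitative type; as written, the proposal does not prove the theorem.
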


A more delicate issue, also in view of stickiness results which appear to be typical for nonlocal objects, is the continuity of $s$-minimal functions across the boundary. 
We exhibit some positive results in
Propositions~\ref{EXA-01} and~\ref{cont}, % exhibiting $s$-minimal functions that are continuous up to the boundary of the domain, 
however we stress that discontinuities across the boundary  may arise. In particular, we have:

\begin{theorem}\label{LACK}
Let 
$$\Omega:=\big\{ (x,y) \in \R^2 \; {\mbox{ s.t. }} \; (x+1)^2+y^2<1\big\}$$ and let~$\varphi \colon \Co \Omega \to \R$ be such that~$\varphi \in C_c(\Co \Omega)$, $\mbox{supp } \varphi \subset B_1\setminus \Omega$ and~$\varphi(0,0)=1$.

Then, there exists some~$\widetilde s\in (0,1)$ such that, for all~$s\in(0,\widetilde s)$, any $s$-minimal function~$u_s\in \W_\varphi^{s,1}(\Omega)$ is not continuous across the boundary. 
\end{theorem}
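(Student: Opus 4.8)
The plan is to exploit the characterization, established in~\cite{bdlv}, that $u_s$ is an $s$-minimal function if and only if each superlevel set $E_t:=\{u_s>t\}$ is a minimizer of the fractional $s$-perimeter in $\Omega$ (with exterior datum $\{\varphi>t\}$), together with the quantitative stickiness phenomenon for nonlocal minimal surfaces in the disk. The geometry is chosen precisely so that $\Omega$ is a disk internally tangent to $\partial B_1$ at the origin, and $\varphi$ is supported in $B_1\setminus\Omega$ with $\varphi(0,0)=1$. First I would fix a level $t\in(0,1)$ and examine $E_t$: since $\varphi$ is compactly supported in $B_1\setminus\Omega$ with $\varphi\le 1$, the exterior datum $\{\varphi>t\}$ is a compact subset of $B_1\setminus\Omega$, so $\Co\Omega\setminus\overline{B_1}$ contributes ``empty'' data at infinity. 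By the stickiness results for nonlocal minimal surfaces (see~\cite{boundary, stickscrelle}), when $s$ is small the $s$-minimal set for such data, confined essentially to a bounded region and forced to be ``mostly empty'' far away, must stick to the boundary: more precisely, there is $\widetilde s$ such that for $s<\widetilde s$ the minimizer $E_t$ satisfies $E_t\cap\Omega = \varnothing$ near the tangency point, or at any rate $E_t$ does not fill a full neighborhood of $(0,0)$ inside $\Omega$.

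The second step is to convert this set-level stickiness into the failure of continuity of $u_s$ at the boundary point $(0,0)$. Since $\varphi$ is continuous with $\varphi(0,0)=1$, continuity of $u_s$ across the boundary at the origin would force $u_s(x)\to 1$ as $x\to(0,0)$ from inside $\Omega$; equivalently, for every $t<1$, the set $E_t=\{u_s>t\}$ would have to contain a full (relative) neighborhood of $(0,0)$ in $\overline\Omega$, in particular a nontrivial portion of $\Omega$ accumulating at the origin with density $1$ there. The stickiness conclusion of the first step says exactly the opposite for $s<\widetilde s$: $E_t$ detaches from $\partial\Omega$ at the origin (has density $0$, or is empty, near $(0,0)$ inside $\Omega$). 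Choosing $t$ close to $1$ and using monotonicity of $E_t$ in $t$, this contradiction shows $u_s$ cannot be continuous across $\partial\Omega$ at $(0,0)$.

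For the stickiness input itself I would follow the barrier/competitor strategy used for nonlocal minimal surfaces in a tangent-ball configuration: one compares $E_t\cap\Omega$ against the empty set inside $\Omega$ and estimates the change in $s$-perimeter. The interaction of a small spherical cap near the tangency point $(0,0)$ with the exterior (which is ``empty'' in a large annulus $B_R\setminus B_1$ once $R$ is large and $\varphi$ has compact support) produces, for $s$ small, a negative energy contribution of order dominating the perimeter cost of removing the cap; this is the standard mechanism by which small $s$ forces sets to be pushed off the boundary and, in the internally tangent disk, to stick. Quantitatively one uses that the tail term $\mathrm{Tail}$ of an ``almost empty'' exterior behaves like a constant (in $s$) times a small factor, beating the $s$-perimeter which degenerates as $s\to 0$; this yields the threshold $\widetilde s$, uniform in the level $t$ staying in a fixed interval $[t_0,1)$.

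The main obstacle I expect is making the stickiness estimate uniform in the level $t$ and genuinely about a \emph{neighborhood} of the origin rather than about a single point: one needs that for all $t$ in a left-neighborhood of $1$, the minimal set $E_t$ is detached from $(0,0)$ by a definite amount, and that these detachments are compatible with $E_t$ being a single family of nested sets (the superlevel sets of one function $u_s$). Handling the dependence on $t$ of the exterior data $\{\varphi>t\}$ — which shrinks to the single point $(0,0)$ as $t\to 1$ — while keeping the perimeter-versus-tail comparison effective, is the delicate quantitative heart of the argument; the rest (the reduction from function continuity to level-set behavior via~\cite{bdlv}, and the contradiction with $\varphi(0,0)=1$) is comparatively routine.
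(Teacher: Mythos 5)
Your overall skeleton matches the paper's: reduce to the superlevel sets $E_t=\{u_s\geq t\}$ being $s$-minimal for the fractional perimeter, show that for small $s$ they are forced away from $\Omega$, and contradict $\varphi(0,0)=1$. However, the key step --- that for all $s$ below a threshold the sets $E_t$ detach from (or are empty in) $\Omega$, uniformly in $t$ --- is where your argument has a genuine gap. You assert it by appeal to ``stickiness results'' for tangent-ball configurations and sketch an energy comparison in which ``the tail behaves like a constant in $s$ \ldots beating the $s$-perimeter which degenerates as $s\to0$.'' This scaling is not correct: as $s\to0$ the relevant quantities do \emph{not} separate in that way. In the mechanism actually used (Proposition~\ref{propb1}, following \cite[Theorem~1.7]{sticki}), both the ``local'' contribution to the fractional mean curvature at a point admitting an exterior tangent ball and the contribution of the exterior datum scale like $1/s$; the comparison is between $\omega_n$ and $2\overline\alpha(E_0)$ after multiplying by $s$, where $\overline\alpha$ is the set function in \eqref{alphaf}. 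Since $\mbox{supp }\varphi$ is bounded, $\overline\alpha(\mbox{supp }\varphi)=0<\omega_n/2$, and since $\mbox{supp }\varphi\subset B_1\setminus\Omega$ does not completely surround $\Omega$ (take $p=(-2,0)$), the sliding-ball/Euler--Lagrange argument of Proposition~\ref{propb1} yields that every $s$-minimal set with exterior datum contained in $\mbox{supp }\varphi$ is empty in $\Omega$ for $s<s_0$. Your proposed competitor computation is not carried out and, as described, would not close.

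Two further remarks. First, the uniformity in $t$ that you flag as ``the delicate quantitative heart'' is in fact immediate: Proposition~\ref{propb1} gives a single threshold $s_0=s_0(n,\Omega,E_0)$ valid for \emph{every} subset $E_1\subset E_0:=\mbox{supp }\varphi$, and $\{\varphi\geq t\}\subset\mbox{supp }\varphi$ for all $t>0$; there is no need to track the shrinking of $\{\varphi>t\}$ as $t\to1$, nor to restrict to levels near $1$. Second, the conclusion is much stronger than the local detachment near $(0,0)$ you aim for: one gets $\{u_s\geq t\}\cap\Omega=\emptyset$ for every $t>0$, hence $u_s\equiv0$ in all of $\Omega$ (using also $0\leq u_s\leq\max\varphi$ from \cite[Theorem~4.4]{bdlv}), so the discontinuity at the origin follows directly from $\varphi(0,0)=1$ without any density or neighborhood analysis at the tangency point.
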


We also present a comparison principle between maximum/minimum solutions:

\begin{theorem}\label{cp1} There exists~$\Theta = \Theta(n,s)>1$ such that the following statement holds true.

If~$\Omega\subset \Rn$ is a bounded open set with Lipschitz boundary such that \eqref{connect} holds true and if $\varphi_1, \varphi_2 \colon \Co \Omega \to \R$ satisfy \eqref{PHIA} and are such that \[\varphi_1 \geq \varphi_2,\]
		then
		\[ \overline{ u}_1\geq \overline{ u}_2\qquad{\mbox{and}}\qquad \underline{u}_1 \geq \underline{ u}_2,\]
		where~$\overline{u}_i$ and~$\underline{u}_i$, with~$i\in \{1,2\}$,
	are the maximum, respectively the minimum, $s$-minimal functions with respect to the exterior data~$\varphi_i$.
\end{theorem}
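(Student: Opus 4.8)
The plan is to reduce the comparison of the extremal $s$-minimal functions to the comparison of their super-level sets via the coarea-type correspondence between $s$-minimal functions and nonlocal minimal surfaces established in~\cite{bdlv}. First, I would recall that, by the results invoked in the excerpt, for each exterior datum $\varphi_i$ the family of $s$-minimal functions in $\W^{s,1}_{\varphi_i}(\Omega)$ is nonempty and closed under taking pointwise maximum and minimum of two minimizers (this is the standard lattice property coming from the submodularity of the fractional perimeter, equivalently from the inequality $|u\vee v(x)-u\vee v(y)|+|u\wedge v(x)-u\wedge v(y)|\le|u(x)-u(y)|+|v(x)-v(y)|$), so that the maximal and minimal minimizers $\overline u_i,\underline u_i$ are well defined; the hypotheses~\eqref{connect} and~\eqref{PHIA} guarantee via Theorem~\ref{exlev} that these are genuine continuous functions, which makes the pointwise statements meaningful. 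The key structural fact I would use is that for an $s$-minimal function $u$ the set $E^u_t:=\{u>t\}$ is, for a.e.\ $t\in\R$, an $s$-minimal set in $\Omega$ with exterior datum $\{\varphi_i>t\}$; conversely a family of nested $s$-minimal sets can be stacked back into an $s$-minimal function.

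The heart of the argument is a comparison principle at the level of sets: if $\varphi_1\ge\varphi_2$ then $\{\varphi_1>t\}\supseteq\{\varphi_2>t\}$ for every $t$, and I claim that the maximal $s$-minimal set with exterior data $\{\varphi_1>t\}$ contains the maximal $s$-minimal set with exterior data $\{\varphi_2>t\}$, with the analogous statement for the minimal sets. This is precisely the type of ordering for nonlocal minimal surfaces; one route is to invoke the comparison principle of~\cite{seo23}, another (more self-contained) route is the classical ``moving up the larger, moving down the smaller'' argument: given $s$-minimal sets $E\supseteq$ exterior data $F_1$ and $G\supseteq$ exterior data $F_2$ with $F_1\supseteq F_2$, one tests minimality of $E$ against $E\cup G$ and minimality of $G$ against $E\cap G$, adds the two resulting inequalities, and uses the submodularity of $\Per_s$ to conclude $\Per_s(E\cup G;\Omega)+\Per_s(E\cap G;\Omega)\le\Per_s(E;\Omega)+\Per_s(G;\Omega)$ forces both $E\cup G$ and $E\cap G$ to be $s$-minimal as well; feeding $E\cup G$ back into the maximal minimizer and $E\cap G$ into the minimal minimizer yields the ordering. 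One must be slightly careful that $E\cup G$ and $E\cap G$ are admissible competitors, i.e.\ have finite fractional perimeter in $\Omega$ and the correct exterior data, which follows from $F_1\supseteq F_2$ and the submodularity bound.

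Finally I would reassemble: since $\overline u_1$ and $\overline u_2$ are continuous and their super-level sets are (up to null sets) the maximal $s$-minimal sets with exterior data $\{\varphi_1>t\}$ and $\{\varphi_2>t\}$ respectively, the set-level ordering gives $\{\overline u_1>t\}\supseteq\{\overline u_2>t\}$ for a.e.\ $t$, hence (by continuity, for every $t$), which is exactly $\overline u_1\ge\overline u_2$; the same reasoning applied to the minimal sets yields $\underline u_1\ge\underline u_2$. The main obstacle I anticipate is making the correspondence ``$s$-minimal function $\leftrightarrow$ nested family of $s$-minimal sets'' precise enough to identify the super-level sets of the \emph{maximal} function with the \emph{maximal} minimal sets for a.e.\ level (not merely with \emph{some} minimal set), and dually for the minimum; this requires either a careful stacking/selection argument (choosing the maximal minimal set at each rational level and taking right-continuous envelopes) or an independent verification that the function built by stacking the maximal minimal sets is itself the maximal $s$-minimal function. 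Once this identification is in hand, the rest is the submodularity computation above, which is routine.
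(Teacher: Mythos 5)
Your proposal is correct and follows essentially the same route as the paper: reduce to the super-level sets, observe that $\{\varphi_1\ge t\}\supseteq\{\varphi_2\ge t\}$, use the submodularity inequality $\Per_s(E\cup G,\Omega)+\Per_s(E\cap G,\Omega)\le\Per_s(E,\Omega)+\Per_s(G,\Omega)$ together with the minimality of each set against the union/intersection competitor, and then invoke the maximal-volume characterization of the level sets of $\overline u_i$ (which the paper takes from the proof of \cite[Theorem~4.8]{bdlv}, resolving exactly the identification issue you flag at the end) to conclude the set inclusion and hence the pointwise ordering via the continuity supplied by Theorem~\ref{exlev}.
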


The rest of this paper is organized as follows.
In Section~\ref{sec:2prelim} we collect some notation, basic definitions
and preliminary results that will be used throughout the paper.

Section~\ref{sec:proey2hfqf} contains the proofs of the
continuity statements
in Theorems~\ref{exlev} and~\ref{upb}, while Section~\ref{sec4duweitiuw}
is devoted to examples of continuity and
lack of continuity across the boundary
and to the proof of Theorem~\ref{LACK}.

The necessity of the connectedness assumption~\eqref{connect}
in Theorem~\ref{exlev} is discussed in Section~\ref{sef7671fjhfjhq},
together with the example constructed in
Theorem~\ref{pro42}.

Section~\ref{fgiw4t734sec777}
is devoted to 
the comparison statement of Theorem~\ref{cp1}.

The paper concludes with Appendices~\ref{appe:po8372} and \ref{apboh}, in which we discuss  technical details pertaining to Theorem \ref{exlev} in the first appendix and Theorem~\ref{LACK} in the latter.
%in the first appendix
% we discuss the connection between the continuity of a function and the fact that its superlevel sets are compactly contained one inside the other, and in the latter we give some technical details necessary for the proof of Theorem~\ref{LACK}.

\section{Definitions and preliminaries} \label{sec:2prelim}

We use, along this paper, the following notations for measure theoretic interior, exterior and boundary. Given a measurable set~$E\subset\Rn$, we define its \emph{measure theoretic boundary} as
$$
\partial^- E:=\{x\in\Rn\;{\mbox{ s.t. }}\;0<|E\cap B_r(x)|< |B_r(x)|\mbox{ for every }r>0\}.
$$
This is the topological boundary of the set~$E^{(1)}$ of points of density~$1$ of~$E$, i.e.
\[ E^{(1)}:=\left\{ x\in \Rn \;{\mbox{ s.t. }} \; \lim_{r\to 0} \frac{|E\cap B_r(x)|}{|B_r(x)|} =1\right\}.\]  We recall that~$|E\Delta E^{(1)}|=0$ by the Lebesgue Differentiation Theorem.

The \emph{measure theoretic interior} and \emph{exterior} of~$E$
are defined respectively as
$$
E_{int}:=\{x\in\Rn\; {\mbox{ s.t. }}\;|E\cap B_r(x)|= |B_r(x)|\mbox{ for some }r>0\}
$$
and
$$
E_{ext}:=\{x\in\Rn\; {\mbox{ s.t. }}\;|E\cap B_r(x)|= 0\mbox{ for some }r>0\}.
$$
These are open sets and~$E_{int}$ is the topological interior of~$E^{(1)}$. Denote also
\[ \mbox{cl}(E) =E_{int} \cup \partial^- E\]
as the \emph{measure theoretic closure} of the set~$E$.
% When a set $E$ has finite perimeter, we employ the measure theoretic closure to represent the set, i.e.
%\begin{equation} \label{conv1} x \in E \qquad \mbox{ if and only if } \limsup_{r\to 0}\frac{|E\cap B_r(x)|}{|B_r(x)|} >0.
%\end{equation}
	\medskip
	
Regarding the Definition \ref{defn11}, we point out that an $s$-minimal function $u\in \W^{s,1}(\Omega)$ is well defined without a priori conditions on the exterior data (and this is due to a fractional Hardy inequality, see \cite{bdlv}). In~\cite[Theorem~1.5]{bdlv}, we proved the existence of an $s$-minimal function whenever the ``nonlocal tail'' of the exterior data in a large enough neighborhood of $\Omega$ is summable in~$\Omega$. To be more precise, there exists~$\Theta >1$ such that, whenever 
%the ``nonlocal tail'' of the exterior data, taken in a~$\Theta d$-neighborhood of~$\Omega$, is summable , i.e. whenever
\begin{equation} \label{Thbe2w4} \| \tail_s(\varphi, \Omega_{\Theta \diam(\Omega)} \setminus \Omega; \cdot)\|_{L^1(\Omega)} :=\int_\Omega\bigg[
\int_{\Omega_{\Theta \diam(\Omega) } \setminus \Omega} \frac{ |\varphi(y)|}{|x-y|^{n+s}} \, dy
		 \bigg]\,dx <+\infty,
	 \end{equation}
then there exists an $s$-minimal function~$u \in \W_\varphi^{s,1}(\Omega)$. In particular, if~$\varphi \in L^\infty(\Omega_{\Theta d}\setminus \Omega)$, then~\eqref{Thbe2w4} holds true.

%%\begin{remark}\label{en}
If~\eqref{Thbe2w4} stands and we consider the energy functional
 \eqlab{ \label{energy} 
 \E(u) :=\frac{1}{2}\iint_{Q(\Omega)} \frac{|u(x)-u(y)|}{|x-y|^{n+s}} \, dx\, dy,}
we see that~$\E(u)$ is finite for all~$u\in \W^{s,1}_\varphi(\Omega)$. 
 According to~\cite[Lemma~2.1]{bdlv}, we have that~$u$ is an $s$-minimal function if and only if~$u$
 is a minimizer of~$\E$ in~$\Omega$, i.e.
 \[ \E(u) \leq \E(v), \qquad \mbox{ for all } v \in \W^{s,1}_\varphi(\Omega) .\]
%%\end{remark}
\smallskip

Furthermore, in~\cite{bdlv}, we discussed the connection between $s$-minimal functions and nonlocal minimal surfaces. 
 Nonlocal minimal surfaces were introduced in~\cite{nms}
as objects mimimizing a nonlocal perimeter. 
More precisely, given a fractional parameter~$s\in(0,1)$, the $s$-fractional perimeter of a measurable set~$E\subset\R^n$ in an open set~$\Omega\subset\R^n$ is defined as
\begin{equation}\label{defnotper}\begin{split}
\Per_s(E,\Omega)&:=\frac{1}{2}\iint_{Q(\Omega)}\frac{|\chi_E(x)-\chi_E(y)|}{|x-y|^{n+s}}dx\,dy\\
&
=\Ll_s(E\cap\Omega,\Omega\setminus E)+\Ll_s(E\cap\Omega,\Co E\setminus\Omega)+\Ll_s(\Omega\setminus E,E\setminus\Omega),
\end{split}\end{equation}
where the interaction~$\Ll_s$ of two measurable sets~$A$ and~$B\subset\R^n$ such that~$|A\Delta B|=0$ is given by
$$
\Ll_s(A,B):=\int_A\int_B\frac{dx\,dy}{|x-y|^{n+s}}.
$$
When~$\Omega=\R^n$, we simply have that~$\Per_s(E,\R^n)=\frac{1}{2}[\chi_E]_{W^{s,1}(\R^n)}$.

We say that~$E$ is an $s$-minimal set for the fractional perimeter in~$\Omega$
(and~$\partial E$ is a nonlocal minimal surface) if~$\Per_s(E,\Omega)<+\infty$
and
$$ \Per_s(E,\Omega)\le \Per_s(F,\Omega) $$
for all~$F\subset\R^n$ such that~$F\setminus \Omega=E\setminus\Omega$.
In this context, \cite[Theorem 1.3]{bdlv} gives that a function is $s$-minimal if and only if all of its level sets are $s$-minimal sets. 

We also recall that, as pointed out in~\cite{nms}, nonlocal minimal surfaces satisfy a fractional mean curvature equation, that is, if~$E$ is $s$-minimal for the
fractional perimeter in~$\Omega$, then,
for every~$x\in(\partial E)\cap\Omega$,
$$ H_s[E](x)
%:=\int_{\Rn}\frac{\chi_{\R^n\setminus E}(y)-\chi_{E}(y)}{|x-y|^{n+s}}\,dx
=0,$$ 
in the sense given by
\begin{equation}\label{mc} H_s[E](x) =\lim_{\rho \to 0} H_s^\rho[E](x), \qquad
 H_s^\rho[E](x)  = \int_{\Rn\setminus B_\rho(x)}\frac{\chi_{\R^n\setminus E}(y)-\chi_{E}(y)}{|x-y|^{n+s}}\,dx.
 \end{equation}

\section{Continuity up to the boundary
and proofs of Theorems~\ref{exlev}
and~\ref{upb}}\label{sec:proey2hfqf} 

In this section, we establish the continuity results stated in Theorems~\ref{exlev}
and~\ref{upb}.

\begin{proof}[Proof of Theorem~\ref{exlev}]
By~\cite[Theorem~4.8]{bdlv},  there exists an $s$-minimal function~$u \in \W_\varphi^{s,1}(\Omega)$. Also, from~\cite[formula~(4.33) and Theorem~4.4]{bdlv} we have that~$ u\in L^\infty(\Omega)$ and 
  \begin{equation}\label{INund:sd9i331} \esssup_{\Omega} u \leq \sup_{\Omega_{\Theta d}\setminus \Omega} \varphi\qquad{\mbox{and}}\qquad \essinf_{\Omega} u \geq \inf_{\Omega_{\Theta d}\setminus \Omega} \varphi.\end{equation}
As customary, by~$u$ being continuous
we mean that there exists a function~$\widetilde u \in C(\Omega)$ such that~$u = \widetilde u$ almost everywhere in~$\Omega$. This is equivalent to having
  \[\ell^-(x_0)= \ell^+(x_0) \quad \mbox{ for every } x_0\in \Omega,   \]
  		where
  		\begin{eqnarray*}
  		 \ell^-(x_0) &:= &\essliminf_{ x\to x_0} u(x) := \lim_{r \searrow 0} \essinf_{B_r(x_0)} u\\
  		 {\mbox{and }}\qquad \ell^+(x_0)&:=&
  		  \esslimsup_{ x\to x_0} u(x):=
  		  \lim_{r \searrow 0} \esssup_{B_r(x_0)} u
  		 .\end{eqnarray*}
  		
  		  	By~\eqref{INund:sd9i331} we have that 
  	\begin{equation}\label{31BIS}
  	 \inf_{\overline{\Omega_D}\setminus \Omega} \varphi \leq \ell^- \leq \ell^+\leq \sup_{\overline{\Omega_D}\setminus \Omega} \varphi.\end{equation}
  	 
We point out that the claim of Theorem~\ref{exlev} is proved
if we check that~$\ell^- (x)= \ell^+(x)$ for all~$x\in\Omega$.
To establish this, we suppose by contradiction that there exists some~$x_0\in \Omega$ such that \[  \ell^-(x_0)< \ell^+(x_0).\]
In light of this, we take~$t\in ( \ell^-(x_0), \ell^+(x_0))$ and
notice that this implies that there exists some~$\varepsilon>0$ such that 
  \begin{equation}
  \label{fg}
  \essinf_{B_\varepsilon(x_0)} u < t <\esssup_{B_\varepsilon(x_0)} u .
  \end{equation}   
  Moreover, from~\cite[Theorem~1.3]{bdlv}
  we have that~$E_t:=\{ u\geq t\}$ is $s$-minimal for the fractional perimeter
  in~$\Omega$. 
  
  We claim that 
  \begin{equation}\label{quo3ry8325ygekuwgk}\begin{split}
& {\mbox{there exists some~$r_0>0$ such that~$B_{r_0}(x_0)\subset \Omega$ and, for all~$r\in (0,r_0)$,}}\\
 & 0<|E_t \cap B_r(x_0)| <|B_r(x_0)|.\end{split}\end{equation}
  Indeed, the lack of these strict inequalities would imply that either~$u \geq t$ or~$u<t$ almost everywhere in~$B_r(x_0)$, for all~$r\in (0,r_0)$, which
  would contradict~\eqref{fg}. This establishes~\eqref{quo3ry8325ygekuwgk}.
  
We point out that the claim in~\eqref{quo3ry8325ygekuwgk}  
ensures that~$x_0\in \partial^- E_t$.

In light of these considerations, 
we have that, taking~$t_1<t_2 \in (\ell^-(x_0),\ell^+(x_0))$, then
  \[ x_0 \in\partial^- E_{t_1}\cap \partial^- E_{t_2}\]
  and~$E_{t_1}$ and~$E_{t_2}$ are $s$-minimal in~$B_{r_0}(x_0)$. But~\cite[Theorem~1.1]{seo23} implies that~$E_{t_1}=E_{t_2}$, and in particular 
  \[ \{ \varphi\geq t_1\} =\{\varphi \geq t_2\} \] (up to sets of measure zero), 
  which is in contradiction with
  the continuity of~$\varphi$ on a connected set and the fact that~$t_1$, $t_2 \in (\inf_{\overline{\Omega_D}\setminus \Omega} \varphi, \sup_{\overline{\Omega_D}\setminus \Omega} \varphi)$,
  thanks to~\eqref{31BIS}
  (see Remark~\ref{kk2} for full details about this technical point). 
 \end{proof}
 
We now establish boundary continuity of $s$-minimal functions:

\begin{proof}[Proof of Theorem~\ref{upb}]
	The argument is an adaptation of the proof of~\cite[Proposition~8.2]{teolu} and relies on the regularity theory for the obstacle problem for the~$s$-perimeter developed by Caffarelli, De Silva and Savin~\cite{obss}.
	
	To show that~$u$ is continuous up to the boundary of~$\Omega$, it is enough to prove that
	\begin{equation} \label{bdary_reg_claim_proof}
		\mbox{for every } x \in \partial \Omega, \mbox{ the limit } 
		\ell(x) := \lim_{\Omega \ni y\to x} u(y) \mbox{ exists and is finite}.
	\end{equation}
	Indeed, if this is the case, then it is easy to see that~$\ell \in C(\partial \Omega)$ and thus that the extension of~$u|_\Omega$ by~$\ell$ defines a continuous function in the whole~$\overline{\Omega}$.
	
	To prove~\eqref{bdary_reg_claim_proof}, we define
	\bgs{
		\ell^-(x):=\liminf_{\Omega \ni y \to x} u(y)\in\R
		\qquad\mbox{and}\qquad
		\ell^+(x):=\limsup_{\Omega \ni y \to x} u(y)\in\R,
	}
	for every~$x\in\partial\Omega$. These are indeed well-defined, since~$u|_{\Omega}\in C(\Omega)\cap L^\infty(\Omega)$. Claim~\eqref{bdary_reg_claim_proof} is then equivalent to showing that~$\ell^-(x)=\ell^+(x)$ for all~$x\in\partial\Omega$.
	
	We argue by contradiction and suppose that~$\ell^-(x_0) < \ell^+(x_0)$ at some~$x_0\in\partial\Omega$. Then, at least one between~$\ell^-(x_0)$ and~$\ell^+(x_0)$
	is different from~$\varphi(x_0)$. We assume that~$\ell^-(x_0)<\varphi(x_0)$, the other cases being proved in a similar way.
	
	Consider the level sets~$E_t:=\{u\geq t\}\subset\Rn$, which are $s$-minimal for the fractional perimeter
	in~$\Omega$ for every~$t\in\R$, by~\cite[Theorem~1.3]{bdlv}. We begin by observing that, for every~$t\in(\ell^-(x_0),\ell^+(x_0))$,
	\eqlab{\label{meas_bdary}
	0<\big|E_t\cap (B_r(x_0)\cap\Omega)\big|<|B_r(x_0)\cap\Omega|\qquad\mbox{for every }r>0.
}
	Indeed, if there exists~$r>0$ such that~$\big|E_t\cap (B_r(x_0)\cap\Omega)\big|=0$, then~$u< t$ in~$B_r(x_0)\cap\Omega$. 
	This implies that~$\ell^+(x_0)\leq t$, thus giving a contradiction. Similarly, $\big|E_t\cap (B_r(x_0)\cap\Omega)\big|=|B_r(x_0)\cap\Omega|$ for some~$r>0$
	would yield that~$\ell^-(x_0)\geq t$.
	This completes the proof of~\eqref{meas_bdary}, and gives additionally that~$x_0\in\partial^-E_t$. 
	
	Now we claim that \eqlab{\label{top_bdary}
		x_0\in\partial^-E_t\quad\mbox{and there exist }x_k^t\in\partial^-E_t\cap\Omega\, \mbox{ such that }\, x_k^t\xrightarrow{k\to+\infty}x_0.
	}
	To prove this, %that~$x_0$ can be approachedfrom the inside of~$\Omega$ by a sequence of points belonging to the boundary~$\partial^- E_t$, 
	we first observe that, by the regularity of~$\Omega$, there exists~$r_0>0$ such that~$\Omega\cap B_r(x_0)$ is a connected open set for every~$r\in(0,r_0)$. Then, notice that we can write~$\Omega\cap B_r(x_0)$ as the disjoint union
	$$
	\Omega\cap B_r(x_0)=\Big(\Omega\cap B_r(x_0)\cap (E_t)_{int}\Big)\cup\Big(\Omega\cap B_r(x_0)\cap (E_t)_{ext}\Big)\cup\Big(\Omega\cap B_r(x_0)\cap \partial^-E_t\Big).
	$$
	Suppose now that~$\Omega\cap B_r(x_0)\cap \partial^-E_t=\emptyset$. Then, the connectedness of~$\Omega\cap B_r(x_0)$ implies that either
	$$
	\Omega\cap B_r(x_0)\cap (E_t)_{int}=\emptyset,
	$$
	or
	$$
	\Omega\cap B_r(x_0)\cap (E_t)_{ext}=\emptyset.
	$$
	However, both eventualities are in contradiction with~\eqref{meas_bdary}.
	Therefore, $\Omega\cap B_r(x_0)\cap \partial^-E_t\not=\emptyset$ for every~$r\in(0,r_0)$, which completes the proof of~\eqref{top_bdary}.
	
	Consider now~$t_1$, $t_2\in(\ell^-(x_0),\varphi(x_0))$, with~$t_1<t_2$. By the definition of~$E_t$ and the continuity of~$u$ in~$\Omega$, we have that
	\eqlab{\label{inclusion}
	E_{t_2}\subset E_{t_1}\qquad\mbox{and}\qquad\big|E_{t_1}\setminus E_{t_2}\big|\geq \big|(E_{t_1}\setminus E_{t_2})\cap\Omega\big|>0.
	}
	Moreover, since~$x_0\in\partial^-E_{t_2}$,
	by the continuity of~$\varphi$ in~$\Omega_\delta\setminus\Omega$, we can find~$\varrho\in(0,\delta)$ small enough such that
	$$
	B_\varrho(x_0)\setminus\Omega\subset E_{t_2}\subset E_{t_1}.
	$$
	We can thus apply~\cite[Theorem~5.1]{graph} to conclude that the boundaries~$\partial E_{t_2}$ and~$\partial E_{t_1}$ are of class~$C^{1,\frac{1+s}{2}}$ in~$B_\varrho(x_0)$, up to considering a smaller~$\varrho$.
	Actually, the interior regularity of $s$-minimal sets ensures that these boundaries are of class~$C^\infty$ in~$\Omega\cap B_\varrho(x_0)$
	(see~\cite{bootstrap}).
	
	Therefore, the Euler-Lagrange equation of $s$-minimal sets implies that~$H_s[E_{t_i}](x^{t_i}_k)=0$ for every~$k\in\N$ and~$i=1,2$, with~$x^{t_i}_k$ as in~\eqref{top_bdary}. Since the regularity of class~$C^{1,\frac{1+s}{2}}$ of the boundaries is enough to guarantee the continuity of the fractional mean curvature (see e.g.~\cite{mattheorem}), we obtain that
	$$
	H_s[E_{t_i}](x_0)=\lim_{k\to+\infty}H_s[E_{t_i}](x^{t_i}_k)=0,\qquad\mbox{for }i=1,2.
	$$
	
	Then, we can conclude the proof by making use of the strong comparison principle. Indeed, we see that
	\bgs{
	0 &= H_s[E_{t_2}](x_0)-H_s[E_{t_1}](x_0)\\
	&
	=\PV\int_{\Rn}\frac{\chi_{\R^n\setminus E_{t_2}}(x)
	-\chi_{E_{t_2}}(x)-\chi_{\R^n\setminus E_{t_1}}(x)+\chi_{E_{t_1}}(x)}{|x-x_0|^{n+s}}\,dx\\
	&
	=2\PV\int_{\Rn}\frac{\chi_{E_{t_1}\setminus E_{t_2}}(x)}{|x-x_0|^{n+s}}\,.
	}
Now we remark that
the integrand~$\chi_{E_{t_1}\setminus E_{t_2}}\ge0$, thanks to
the first claim in~\eqref{inclusion}. Therefore, this computation
shows that~$\chi_{E_{t_1}\setminus E_{t_2}}=0$, and thus~$E_1=E_2$,
in contradiction
	with the second statement in~\eqref{inclusion}.
	
	Hence, we have proved that~$\ell^-(x)=\ell^+(x)$ for every~$x\in\partial\Omega$, which entails~\eqref{bdary_reg_claim_proof}, thus concluding the argument.
\end{proof}

\section{Lack of continuity across the boundary
and proof of Theorem~\ref{LACK}}\label{sec4duweitiuw}

We build in this section examples of both continuity and lack of it across the boundary.

The first example,  quite elementary, is one of continuity across the boundary and uniqueness. 

\begin{proposition} \label{EXA-01}
	Let~$\Omega$ be a bounded open set and let~$\varphi \colon \Co \Omega \to \R$ be a constant function, say~$\varphi\equiv c\in \R$.
	
	Then, there exists a unique $s$-minimal function~$u\in \W_\varphi^{s,1}(\Omega)$, which is the constant function~$u\equiv c $.
\end{proposition}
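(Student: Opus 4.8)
The plan is to prove this by a direct energy comparison argument, exploiting the fact that the constant function~$c$ has vanishing Gagliardo-Slobodeckij energy on~$Q(\Omega)$. First I would observe that the constant function~$u_0\equiv c$ clearly belongs to~$\W_\varphi^{s,1}(\Omega)$, since~$u_0|_\Omega\equiv c\in W^{s,1}(\Omega)$ and~$u_0=\varphi$ in~$\Co\Omega$, and that~$\E(u_0)=0$ because~$|u_0(x)-u_0(y)|=0$ for all~$x,y$. Since the energy functional~$\E$ defined in~\eqref{energy} is nonnegative, this immediately shows that~$u_0$ is a minimizer of~$\E$ in~$\W_\varphi^{s,1}(\Omega)$, hence an $s$-minimal function by~\cite[Lemma~2.1]{bdlv}. (Note that~\eqref{Thbe2w4} trivially holds here since~$\varphi$ is bounded, so~$\E$ is finite on the admissible class and the equivalence applies.)

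For uniqueness, I would take any $s$-minimal function~$u\in\W_\varphi^{s,1}(\Omega)$ and show~$u\equiv c$. By the equivalence with energy minimization we have~$\E(u)\le\E(u_0)=0$, so~$\E(u)=0$, i.e.
\[
\iint_{Q(\Omega)}\frac{|u(x)-u(y)|}{|x-y|^{n+s}}\,dx\,dy=0.
\]
Since the integrand is nonnegative, this forces~$u(x)=u(y)$ for almost every~$(x,y)\in Q(\Omega)$. Now I would use that~$Q(\Omega)=\R^{2n}\setminus(\Co\Omega)^2$ contains, in particular, all pairs~$(x,y)$ with~$x\in\Omega$ and~$y\in\Co\Omega$: choosing~$y$ in a positive-measure subset of~$\Co\Omega$ where~$u=\varphi\equiv c$, Fubini's theorem gives that for a.e.~$x\in\Omega$ one has~$u(x)=c$. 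Hence~$u=c$ a.e. in~$\Omega$, and since~$u=\varphi\equiv c$ in~$\Co\Omega$ by assumption, we conclude~$u\equiv c$ in~$\Rn$ (up to a null set), which is exactly the asserted uniqueness.

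The main (and only mild) obstacle is the measure-theoretic bookkeeping in the last step: one must make sure the ``$u(x)=u(y)$ for a.e.\ $(x,y)\in Q(\Omega)$'' statement can be pinned down to ``$u(x)=c$ for a.e.\ $x\in\Omega$''. This is handled by noting that the set $\{(x,y): x\in\Omega,\ y\in\Co\Omega\}$ has positive $2n$-dimensional measure and is contained in~$Q(\Omega)$, applying the Fubini--Tonelli theorem to the null set where~$u(x)\ne u(y)$, and using that~$\varphi$ is the constant~$c$ on all of~$\Co\Omega$; there is no issue with the singular kernel since the relation we extract is simply $u(x)=u(y)$, independent of the weight~$|x-y|^{-n-s}$. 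No subtlety involving connectedness of~$\Omega$, regularity of~$\partial\Omega$, or the tail condition is needed, since the constant competitor already realizes the global minimum.
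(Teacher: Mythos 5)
Your proof is correct, but it takes a genuinely different route from the paper. The paper argues through the level-set machinery: it invokes the existence result of~\cite[Theorem~1.5]{bdlv}, the fact that superlevel sets of an $s$-minimal function are $s$-minimal sets (\cite[Theorem~1.3]{bdlv}), and then~\cite[Theorem~1.7]{bdlv} to identify $\{u\geq t\}\cap\Omega$ with $\Omega$ for $t\leq c$ and with $\emptyset$ for $t>c$, which reconstructs $u\equiv c$. Your argument is a direct energy comparison against the zero-energy competitor, and it is shorter and more self-contained; it also delivers existence and uniqueness in one stroke rather than relying on three external results. One further simplification worth noting: you do not even need the detour through~\eqref{Thbe2w4} and the equivalence of~\cite[Lemma~2.1]{bdlv}. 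Plugging the constant competitor $v\equiv c$ directly into Definition~\ref{defn11} gives
\[
\iint_{Q(\Omega)}|u(x)-u(y)|\,\frac{dx\,dy}{|x-y|^{n+s}}\leq 0,
\]
and since the integrand is nonnegative this forces it to vanish almost everywhere, after which your Fubini argument on $\Omega\times\Co\Omega$ concludes; conversely, the defining inequality~\eqref{GH2} for $u\equiv c$ against any competitor $v$ reads $-\iint_{Q(\Omega)}|v(x)-v(y)|\,|x-y|^{-n-s}\,dx\,dy\leq 0$, which is trivially true. This avoids any finiteness or admissibility bookkeeping entirely (though your invocation of the tail condition is harmless here, since $\varphi$ is bounded). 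What the paper's longer route buys is coherence with the level-set philosophy used throughout (the analogy with the half-plane being the unique $s$-minimal set for half-plane data, which the authors comment on right after the proof); what yours buys is elementarity and independence from the results of~\cite{bdlv} beyond the definition itself.
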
 

\begin{proof}
The fact that there exists an $s$-minimal function is a 
consequence of~\cite[Theorem~1.5]{bdlv}.
Also, \cite[Theorem~1.3]{bdlv} gives that the set~$\{ u \geq t\} $ is $s$-minimal for the fractional perimeter
in~$\Omega$ with respect to the exterior datum~$\{\varphi \geq t\}$.

However, thanks to~\cite[Theorem~1.7]{bdlv}, for all~$t\leq c$, we have that~$\{ \varphi \geq t\} =\Co \Omega$, and thus~$\{ u \geq t\} \cap \Omega= \Omega$, and, for all~$t>c$, we have that~$\{ \varphi \geq t\} =\emptyset$, and hence~$\{ u \geq t\}\cap \Omega =\emptyset$. It follows that~$u\equiv c$, concluding the proof. 
\end{proof}

Notice the similarity of the given example with non-local minimal sets. Indeed, if the exterior datum is the half-plane, the unique $s$-minimal set is the half-plane itself. 

What is more, for non-local minimal surfaces, a very interesting feature of stickiness arises (see~\cite{boundary, seograph}). 
In particular, in~\cite[Theorem~1.4]{boundary}
one constructs an exterior datum in~$\R^2$ looking at two compactly supported bumps, with support away from~$\Omega$, that
push the $s$-minimal surface to stick to the boundary in~$\Omega$
(see~\cite[Figure~4]{boundary}). 
Our second example, showcasing again continuity across the boundary, is in contrast with this construction for nonlocal minimal surfaces.

\begin{proposition}\label{cont}
Let $\Omega \subset \Rn$ be a bounded and connected open set with $C^2$ boundary, and let~$\varphi \in C_c(\Omega)$ be such that~$\varphi \geq 0$ and~$\mbox{supp } \varphi \subset \Co \Omega_\delta$, for some~$\delta>0$. 

Then, there exists~$\widetilde s\in (0,1)$ such that, for all~$s\in(0,\widetilde s)$, if~$u_s \in \W^{s,1}_\varphi(\Omega)$ is an $s$-minimal function, then~$u_s\equiv0$ in~$\Omega$. 
\end{proposition}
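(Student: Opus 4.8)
The plan is to reduce the statement, via the level-set characterization, to a claim about $s$-minimal sets, and then exploit the fact that for small~$s$ the fractional perimeter is dominated by the ``interaction with infinity'' term so strongly that no set sitting away from the support of~$\varphi$ can beat the trivial competitor. Concretely, since $\varphi\in C_c(\Omega)$, $\varphi\ge0$, with $\operatorname{supp}\varphi\subset\Co\Omega_\delta$, the level sets of the exterior datum are $\{\varphi\ge t\}=\Co\Omega$ for $t\le0$ and, for $t>0$, $\{\varphi\ge t\}=:A_t$ is a compact subset of $\Co\Omega_\delta$. By \cite[Theorem~1.3]{bdlv}, $E_t:=\{u_s\ge t\}$ is $s$-minimal in~$\Omega$ with exterior datum $\{\varphi\ge t\}$. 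For $t\le0$ the exterior datum is $\Co\Omega$, so (as in the proof of Proposition~\ref{EXA-01}, using \cite[Theorem~1.7]{bdlv}) $E_t\cap\Omega=\Omega$; for $t>0$ I must show $E_t\cap\Omega=\emptyset$ once $s$ is small, since then $u_s\in[0,\varphi]$ forces $u_s\equiv 0$ in~$\Omega$ together with the fact that $\varphi\equiv 0$ on a neighborhood of~$\partial\Omega$ inside $\Co\Omega$. Here I would also invoke \eqref{INund:sd9i331} to get $0\le u_s\le\sup\varphi$ in~$\Omega$ directly.

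The heart of the matter is therefore: for $t>0$, the set $F:=A_t=\{\varphi\ge t\}$ (which lives entirely in $\Co\Omega_\delta$) is itself an admissible competitor for $\operatorname{Per}_s(\cdot,\Omega)$ against $E_t$, since $F\setminus\Omega=E_t\setminus\Omega=A_t$ and $F\cap\Omega=\emptyset$. Minimality gives $\operatorname{Per}_s(E_t,\Omega)\le\operatorname{Per}_s(F,\Omega)$. I would expand both sides via the decomposition in \eqref{defnotper}. Writing $G:=E_t\cap\Omega$ (a subset of $\Omega$), the difference $\operatorname{Per}_s(E_t,\Omega)-\operatorname{Per}_s(F,\Omega)$ splits into a ``good'' part, which is $\ge c\,\Ll_s(G,\Co\Omega\setminus\Omega_\delta)$-type quantity bounded below by something like $|G|$ times a positive constant independent of~$s$ (using $\operatorname{dist}(G,\operatorname{supp}\varphi)\ge\delta$ and $\operatorname{diam}$ controls), plus the genuinely ``$s$-singular'' part coming from $\Ll_s(G,\Omega\setminus G)$ and the interaction $\Ll_s(G,A_t)$. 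The key quantitative fact is that $\Ll_s(G,\Co\Omega)\to0$ is \emph{not} what happens; rather, for a fixed bounded $G$, $\Ll_s(G,\Co\Omega)$ blows up like $1/s$ as $s\to0$ because of the slowly decaying kernel, and comparing the coefficients of the $1/s$ singularity on the two sides forces $|G|=0$. Roughly: $\Ll_s(G,\R^n\setminus B_R)\sim \frac{|G|\,\omega_{n-1}}{s}R^{-s}$ for $R$ large, and this term appears on the $E_t$ side (interaction of $G\subset\Omega$ with far-away complement) but is \emph{absent} from the $F$ side (since $F$ is bounded and $F\cap\Omega=\emptyset$), so after subtracting the bounded remainders one gets $\frac{c\,|G|}{s}\le C$ for $s$-independent $c>0$, $C$, which is impossible for small $s$ unless $|G|=0$.

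The main obstacle, and where I would spend the most care, is making the last comparison rigorous: one must check that \emph{every} term that could carry a competing $1/s$ singularity on the $F$-side is actually bounded uniformly in~$s$ (because $F=A_t$ is a fixed compact set with positive distance to nothing problematic — its self-interaction with $\Omega$ is the only subtlety, and that is exactly the $\Ll_s(\Omega\setminus F,F\setminus\Omega)=\Ll_s(\Omega,A_t)$ term, which is bounded since $\operatorname{dist}(\Omega,A_t)\ge\delta$ and $\Omega$ is bounded), while on the $E_t$-side the term $\Ll_s(G,\Co\Omega)$ genuinely diverges. I would isolate $\Ll_s(G,\R^n)=\Ll_s(G,\Omega)+\Ll_s(G,\Co\Omega)$ and note $\Ll_s(G,\Co\Omega)\ge\Ll_s(G,\R^n\setminus B_{R})$ with $R=R(\Omega)$, then bound this below by $\frac{|G|}{Cs}$ for $s<s_0$; all other interactions ($\Ll_s(G,\Omega\setminus G)$, $\Ll_s(G,A_t)$, and the full $F$-side) I would bound above by a constant $C(n,\Omega,\delta)$ uniform for $s$ in a neighborhood of~$0$, using $|x-y|^{-n-s}\le \delta^{-n-s}\le\delta^{-n-1}+\delta^{-n}$ on the relevant regions together with finiteness of the volumes. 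Combining, $\frac{|G|}{Cs}\le C'$, hence $|G|=0$ for $s<\widetilde s:=\min\{s_0, (CC')^{-1}\cdot(\text{something})\}$; running this for all $t>0$ and all the $t\le0$ case gives $u_s\equiv0$ a.e.\ in $\Omega$, and since $u_s\in C(\Omega)$ is not assumed here we only conclude the a.e.\ statement, which is exactly what ``$u_s\equiv0$ in $\Omega$'' means for an $s$-minimal function, completing the proof.
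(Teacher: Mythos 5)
Your argument is correct, but it follows a genuinely different route from the paper's. The paper reduces Proposition~\ref{cont} to Proposition~\ref{propb1} in Appendix~\ref{apboh}: a geometric argument in the spirit of the stickiness literature, which slides an exterior tangent ball until it touches $\partial E_t$, invokes the Euler--Lagrange equation $H_s[E_t]=0$ at the contact point, and derives a contradiction from the lower bound $H_s^\rho[E_t]\gtrsim \beta/s$ obtained via the asymptotics of $\overline\alpha$ as $s\to0$. You instead run a direct energy comparison: testing the minimality of $E_t$ against the competitor $F=A_t$ (so that $F\cap\Omega=\emptyset$) and cancelling the common term $\Ll_s(\Omega\setminus G,A_t)$ leaves exactly $\Ll_s(G,\Omega\setminus G)+\Ll_s(G,\Co\Omega\setminus A_t)\le \Ll_s(G,A_t)$ with $G=E_t\cap\Omega$; the right-hand side is bounded by $C(n,\delta,\varphi)\,|G|$ uniformly in $s$ and $t$ because $\dist(\Omega,\mathrm{supp}\,\varphi)\ge\delta$ and $\mathrm{supp}\,\varphi$ is compact, while the left-hand side is bounded below by $c\,|G|/s$ via $\Ll_s(G,\Co B_R)\ge \frac{c_0}{s}(2R)^{-s}|G|$, forcing $|G|=0$ for $s$ small. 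This is more elementary and self-contained: it needs neither the $C^2$ regularity nor the connectedness of $\Omega$, nor the Euler--Lagrange/viscosity machinery, nor the interior regularity of $s$-minimal sets. The trade-off is generality: your argument hinges crucially on both the boundedness of the exterior datum's support \emph{and} its positive distance $\delta$ from $\Omega$ (without the latter, $\Ll_s(G,A_t)$ is no longer controlled by $|G|$), whereas Proposition~\ref{propb1} only requires $\overline\alpha(E_0)<\omega_n/2$ plus an exterior tangent ball, and in particular also covers Theorem~\ref{LACK}, where $\mathrm{supp}\,\varphi$ touches $\partial\Omega$ at the origin and your distance argument would break down. Two minor points to tidy up: the phrase ``$u_s\in[0,\varphi]$'' should just be the two-sided bound $0\le u_s\le\sup\varphi$ from \eqref{INund:sd9i331}, and the cancellation step should note that all three terms of the decomposition \eqref{defnotper} are finite for the minimizer, so the algebra is legitimate.
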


\begin{proof}
Denote by~$K:= \mbox{supp } \varphi $ and~$B:=\max_{K} \varphi$. By~\cite[Theorem~4.4]{bdlv}, we have 
that~$0\leq u_s(x) \leq B$ for all~$x\in \Omega$.
 
The conclusion follows from Proposition \ref{propb1}, recalling the notation \eqref{alphaf}. Since for all~$t\in (0,B]$ it holds that ~$\mathcal E_t:= \{ \varphi \geq t \} \subset K$, while $\overline \alpha(K)=0$  
and~$K$ does not completely surround~$\Omega$, %- indeed, at any level $t$, a small neighborhood of $\Omega$ is in $\Co \E_t$, 
%we get as a consequence of~\cite[Theorem~1.7]{sticki}, see , that 
there is some $s_0:=s_0(n,\Omega, K)\in (0,1/2)$ such that for all~$s<s_0$,
\[ E_s^t\cap \Omega:= \{ u_s\geq t\} \cap \Omega =\emptyset.\]
It follows that~$u _s\equiv 0$ in~$\Omega$, concluding the proof.
\end{proof}

We now address an example of lack of continuity across the boundary: 

\begin{proof}[Proof of Theorem~\ref{LACK}] The proof is basically the same as that of Proposition~\ref{cont}. 
	Denote by~$B:= \max_{\overline{B_1}}\varphi$
	and notice that~$0\leq u_s\leq B$ in~$\Omega$. For all~$t\in (0,B]$, as in the proof of Proposition~\ref{cont}, %we have that~\eqref{zero} holds, hence  
	there exists some~$\widetilde s$ independent of $t$ such that, for all~$s\in(0,\widetilde s)$, 
\[ \{ u\geq t\} \cap \Omega =\emptyset.\]
This gives that~$u\equiv0$ in~$\Omega$, and therefore~$u(0,0)\neq \varphi(0,0)$. 
\end{proof}

An example of a function~$\varphi$ satisfying the assumptions
of Theorem~\ref{LACK}
is~$\varphi(x,y) :=\big(1-\sqrt{x^2+y^2}\big)_+$. Notice that a finite number of compactly supported bumps, with support away from~$\Omega$, can be added without changing the conclusion. Also, more general examples can be constructed with the same procedure.

\section{Necessity of the connectedness assumption in~\eqref{connect}
and proof of Theorem~\ref{pro42}}\label{sef7671fjhfjhq}

We recall that the classical perimeter of~$E$ in~$\Omega$ is
$$
\Per(E,\Omega)=[\chi_E]_{BV(\Omega)}=\mathcal H^{n-1}(\partial^*E\cap\Omega),
$$
where~$\partial^*E$ denotes the reduced boundary of~$E$. If~$E\subset\R^n$ is a bounded set with finite classical perimeter, then
\eqlab{\label{eq:asympt_global}
\lim_{s\to1}(1-s)\Per_s(E,\R^n)=c_n\Per(E,\R^n),	
}	
for some dimensional constant~$c_n>0$, see~\cite{davila} and also~\cite{BBM}.

We consider now the closed set~$K\subset\R^2$ with Lipschitz boundary defined as
$$
K:=\left\{(x_1,x_2)\in\R^2\;{\mbox{ s.t. }}\;x^2_1+x_2^2\leq 1\mbox{ and }x_2
\leq 5|x_1|\right\},
$$
and the open set~$\Omega:=B_2\setminus K$, see
Figure~\ref{figagg}. Notice that~$\Omega$ is bounded and connected, with Lipschitz (disconnected) boundary.
In particular, the assumption in~\eqref{connect} is violated.

\begin{figure}[h!]
	\includegraphics[scale=0.6]{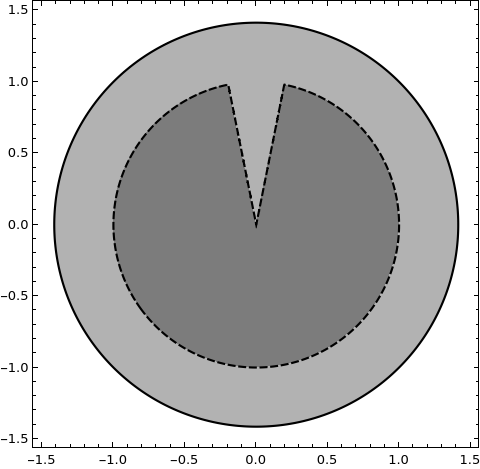}
	\caption{The sets~$K$ and~$\Omega$.}
	\label{figagg}
\end{figure}

We point out that
\bgs{
	\Per(K,\R^2)>\Per(B_1,\R^2).
}
Thus, as a consequence of~\eqref{eq:asympt_global}, we know that there exists~$s_1=s_1(K)\in(0,1)$ such that
\eqlab{\label{eq:strict_esti_sets_per}
	\Per_s(K,\R^2)>\Per_s(B_1,\R^2)
}
for every~$s\in[s_1,1)$.

We then obtain the following result:

\begin{proposition}\label{pro41}
	Let~$s\in[s_1,1)$ and let~$E\subset\R^2$ be any set minimizing~$\Per_s(\,\cdot\,,\Omega)$ among all measurable sets~$F\subset\R^2$ such that~$F\setminus\Omega=K$.
	
	Then,
\begin{equation}\label{des9}
	0<|E\cap\Omega|<|\Omega|.
\end{equation}
\end{proposition}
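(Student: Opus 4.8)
The plan is to rule out the two degenerate possibilities $|E\cap\Omega|=0$ and $|E\cap\Omega|=|\Omega|$ by exhibiting a strictly better competitor in each case. Recall that any competitor $F$ must satisfy $F\setminus\Omega=K$, i.e., $F$ agrees with $K$ on $\mathcal{C}\Omega$; in particular $F$ always contains $K\setminus\Omega=K$ (since $K\subset\mathcal{C}\Omega$ as $\Omega=B_2\setminus K$) and is always contained in $\R^2\setminus(\mathcal{C}\Omega\setminus K)$. Thus the candidate competitors range, modulo what happens inside $\Omega$, between $K$ (the choice $E\cap\Omega=\emptyset$) and $B_2$ (the choice $E\supset\Omega$, i.e.\ $E\cap\Omega=\Omega$ up to null sets). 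Notice also that $B_1\subset K$ is false — rather, $K\subset B_1$ — so the relevant comparison should be set up carefully; the point is that $K$ and $B_1$ both have the same exterior configuration outside $B_2$ only if $K\setminus\Omega=B_1\setminus\Omega$, which does hold since $B_1\subset B_2$ gives $B_1\setminus\Omega=B_1\cap K=B_1\cap(B_1\cap\{x_2\le 5|x_1|\})\ne K$ in general — so instead the clean competitor to use against $E=K$ is not $B_1$ directly but a set obtained by filling in part of $\Omega$.

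First I would treat the case $|E\cap\Omega|=0$, i.e.\ $E=K$ up to a null set. Here I use $B_1$ as a reference: since $B_1\subset B_2$ and $B_1\supset$ the ``missing wedge'' $\Omega\cap B_1=B_1\setminus K$, the set $F_1:=K\cup(B_1\cap\Omega)=B_1\cup K$ is an admissible competitor (it agrees with $K$ outside $\Omega$ and differs from $K$ only inside $\Omega$). Now $\Per_s(\cdot,\Omega)$ differs from $\Per_s(\cdot,\R^2)$ only by terms supported on interactions not involving $\Omega$ on both sides; since $E=K$ and $F_1=B_1\cup K$ have the same trace on $\mathcal{C}\Omega$, minimality of $E$ for $\Per_s(\cdot,\Omega)$ is equivalent to minimality of $E$ for $\Per_s(\cdot,\R^2)$ among sets with that exterior trace, hence $\Per_s(K,\R^2)\le\Per_s(B_1\cup K,\R^2)$. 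The key inequality~\eqref{eq:strict_esti_sets_per}, $\Per_s(K,\R^2)>\Per_s(B_1,\R^2)$, together with a submodularity/cut-and-paste estimate $\Per_s(B_1\cup K,\R^2)+\Per_s(B_1\cap K,\R^2)\le\Per_s(B_1,\R^2)+\Per_s(K,\R^2)$ and $B_1\cap K=K$ (as $K\subset B_1$), would give $\Per_s(B_1\cup K,\R^2)\le\Per_s(B_1,\R^2)<\Per_s(K,\R^2)$, contradicting minimality of $E=K$. Hence $|E\cap\Omega|>0$.

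For the case $|E\cap\Omega|=|\Omega|$, i.e.\ $E\supset\Omega$ up to null sets so $E=B_2$ modulo null sets (since $E\setminus\Omega=K$ and $K\cup\Omega=B_2$), I would argue symmetrically: compare $E=B_2$ with the competitor $F_2:=B_2\setminus(\Omega\setminus B_1)=B_1\cup K$ again (now viewed as removing material from $\Omega$), and run the same submodular estimate, this time also invoking $\Per_s(B_1,\R^2)<\Per_s(K,\R^2)$ to beat $\Per_s(B_2,\R^2)$. The main obstacle, and the step requiring genuine care, is verifying the reduction from $\Per_s(\cdot,\Omega)$-minimality to a clean $\Per_s(\cdot,\R^2)$ comparison and justifying the cut-and-paste (submodularity) inequality for the nonlocal perimeter with the correct bookkeeping of interaction terms — these are standard facts for the fractional perimeter (see~\cite{nms}) but must be applied with attention to which interactions are ``localized'' in $\Omega$. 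Once that bookkeeping is in place, \eqref{eq:strict_esti_sets_per} does all the real work, and~\eqref{des9} follows.
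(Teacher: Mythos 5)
Your proposal follows essentially the same route as the paper: the competitor that does all the work is $B_1$ (your $F_1=K\cup(B_1\cap\Omega)=B_1\cup K$ coincides with $B_1$ up to a null set, since $K\subset\overline{B_1}$), and the ``reduction from $\Per_s(\cdot,\Omega)$ to $\Per_s(\cdot,\R^2)$'' that you flag as the delicate step is exactly the identity $\Per_s(F,\R^2)=\Per_s(F,\Omega)+\Ll_s(F\setminus\Omega,\Co F\setminus\Omega)=\Per_s(F,\Omega)+\Ll_s(K,\Co B_2)$, whose last term is the same finite constant for every admissible competitor (finite because $\dist(K,\Co B_2)\geq 1$). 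The paper performs precisely this bookkeeping, computing $\Per_s(X,\Omega)=\Per_s(X,\R^2)-\Ll_s(K,\Co B_2)$ for $X\in\{B_1,K,B_2\}$ and then invoking \eqref{eq:strict_esti_sets_per}. Your submodularity step is superfluous rather than wrong: since $K\subset B_1$ up to a null set, $B_1\cup K=B_1$ and $B_1\cap K=K$, so the cut-and-paste inequality degenerates into an identity and the first case is just the comparison $\Per_s(B_1,\R^2)<\Per_s(K,\R^2)$, which is \eqref{eq:strict_esti_sets_per}.

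The one genuine loose end is the case $|E\cap\Omega|=|\Omega|$, i.e. $E=B_2$ up to a null set. There you need $\Per_s(B_1,\R^2)<\Per_s(B_2,\R^2)$, but the inequality you invoke, $\Per_s(B_1,\R^2)<\Per_s(K,\R^2)$, does not yield this, and the fractional perimeter is not monotone under inclusion in general, so $B_1\subset B_2$ alone is not an argument either. The missing line is scaling: $\Per_s(B_2,\R^2)=2^{2-s}\Per_s(B_1,\R^2)>\Per_s(B_1,\R^2)$, which is exactly how the paper closes this case. With that one line inserted, your argument is complete and matches the paper's.
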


\begin{proof}
	We observe that
	$$
	|E\cap\Omega|=0 \quad{\mbox{ if and only if }}\quad|E\Delta K|=0
	$$
	and
	$$
	|E\cap\Omega|=|\Omega|\quad{\mbox{ if and only if }}\quad|E\Delta B_2|=0.
	$$
	Thus, to establish the desired result in~\eqref{des9}, it is enough to prove that
	\eqlab{\label{eq:strict_comp}
	\Per_s(B_1,\Omega)<\min\big\{\Per_s(K,\Omega),\Per_s(B_2,\Omega)\big\}.
	}
	For this, 
	recall the notation in~\eqref{defnotper} and notice that
	\bgs{
	\Per_s(B_1,\Omega)&=\Ll_s(B_1\setminus K,B_2\setminus B_1)+\Ll_s(B_1\setminus K,\Co B_2)+\Ll_s(B_2\setminus B_1,K)\\
	&
	=\Ll_s(B_1\setminus K,\Co B_1)+\Ll_s(K,\Co B_1)-\Ll_s(K,\Co B_2)\\
	&
	=\Ll_s(B_1,\Co B_1)-\Ll_s(K,\Co B_2)\\
	&
	=\Per_s(B_1,\R^2)-\Ll_s(K,\Co B_2).
	}
	Similarly,
	$$
	\Per_s(K,\Omega)=\Ll_s(K, B_2\setminus K)=
	\Per_s(K,\R^2)-\Ll_s(K,\Co B_2)
	$$
	and
	$$
	\Per_s(B_2,\Omega)=
	\Ll_s(B_2\setminus K,\Co B_2)=
	\Per_s(B_2,\R^2)-\Ll_s(K,\Co B_2)=2^{2-s}\Per_s(B_1,\R^2)-\Ll_s(K,\Co B_2).
	$$
	These identities, together with~\eqref{eq:strict_esti_sets_per}, show the validity of~\eqref{eq:strict_comp}, concluding the proof.
\end{proof}

With this preliminary work, we can complete the proof of Theorem~\ref{pro42} by arguing as follows:

\begin{proof}[Proof of Theorem~\ref{pro42}]
In the setting of Proposition~\ref{pro41}, we consider~$\varphi:=\chi_K$ and~$u:=\chi_E$. We observe that~$\{u\ge\lambda\}$ is either empty, the whole space~$\R^2$, or equal to~$E$, which minimizes~$\Per_s(\,\cdot\,,\Omega)$ with respect to its datum outside~$\Omega$.
Hence, by~\cite[Theorem~1.3]{bdlv}, we have that~$u$ is an $s$-minimal function in~$\Omega$.

Even if~$\varphi=\chi_K\in C(\R^2\setminus\Omega)$, Proposition~\ref{pro41} yields that~$u=\chi_E\not\in C(\Omega)$, thus providing the desired example.
\end{proof}

\section{Proof of the comparison result in Theorem~\ref{cp1}}\label{fgiw4t734sec777}

To deal with the proof of Theorem~\ref{cp1},
we now build suitable maximal and minimal solutions. For this purpose, for all~$t\in \R$ we 
denote by
\[ \mathcal E_t :=\{ \varphi \geq t\}.\]

Then, there exists~$\Theta = \Theta(n,s)>1$ such that, if  
\[ \varphi \in L^\infty(\Omega_{\Theta \diam (\Omega)} \setminus \Omega), \]
there exist~$ E_t$ and~$  F_t$ which are the unique $s$-minimal sets in~$\Omega$
of maximum and minimum volume, respectively (see~\cite[Proposition~4.6]{bdlv}). 

We then denote 
\sys[
 \overline{u}(x) := ]{&\sup\{\theta\,{\mbox{ s.t. }} \, x\in \overline{E_\theta} \}, && {\mbox{if }}x\in \Omega, \\
 &\varphi(x), &&{\mbox{if }} x\in \Co \Omega,}
 and
 \sys[
 \underline {u}(x) := ]{&\sup\{\theta\, {\mbox{ s.t. }} \, x\in \overline{F_\theta} \}, && {\mbox{if }}x\in \Omega, \\
 &\varphi(x), && {\mbox{if }}x\in \Co \Omega,} and we refer to them as maximal and minimal solutions, respectively.

Moreover, we observe that 
 \[ \underline{u} \leq \overline{u} \qquad \mbox{ almost everywhere in } \, \Rn\]
 and that, by~\cite[Theorem~4.8]{bdlv}, both~$\overline{u}$ and~$ \underline{ u}$ are $s$-minimal functions belonging to~$L^\infty(\Omega)$. 
 
 Additionally, up to enlarging~$\Theta$, if~$\varphi\in C(\overline{\Omega_{\Theta \diam(\Omega)}}\setminus\Omega)$, then~$\underline{ u}$ and~$\overline{u}$ are continuous in~$\Omega$, and if~$\Omega$ is of class~$C^2$ then~$\underline{u}$ and~$\overline{u}$ can be extended with continuity to functions belonging to~$C (\overline{\Omega})$. 
 
With this, we can now proceed with the proof of Theorem~\ref{cp1}.

\begin{proof}[Proof of Theorem~\ref{cp1}]
By Theorem~\ref{exlev}, we have that~$\underline{u}_i$ and~$ \overline{u}_i$, with~$i\in \{1,2\}$, are continuous in~$\Omega$ (more precisely, we identify them with a continuous representative as in Proposition~\ref{kk1}).

We prove that~$\overline{u}_1\ge \overline{u}_2$ (the other claim being
similar).
Suppose by contradiction
that there exists~$x_0\in \Omega$ such that~$ \overline{ u}_1(x_0) <\overline{u}_2(x_0)$
and let~$t$ be such that
\[\overline{u}_1(x_0) <t<\overline{u}_2(x_0).\]
By continuity, there exists~$\eps>0$ such that~$B_\eps(x_0)\subset \Omega$  with
\eqlab{ \label{kl11} 	|\{ \overline{u}_1< t \}\cap B_\eps(x_0)| = |B_\eps(x_0)|
\qquad{\mbox{and}}\qquad
|\{ \overline{u}_2>t \}\cap B_\eps(x_0)| = |B_\eps(x_0)|. }
Notice that
\[ \E_t^2:=\{\varphi_2\geq t\} \subset \E_t^1:= \{ \varphi_1\geq t\} .\]
In the proof of~\cite[Theorem~4.8]{bdlv}, we have obtained that up to sets of measure zero,
\[ E_t^i=\{ \overline{u}_i \geq t \}, \] 
where~$E_t^i$ is the $s$-minimal set in~$\Omega$ with maximum volume, with respect to the exterior data~$E_t^i\cap \Co \Omega=\E_t^i$,  for~$i=1,2$.

Moreover, it holds that
\begin{equation}\label{fyrfhijedkof45689705}
\Per_s(E_t^1\cup E_t^2, \Omega) + \Per_s(E_t^1\cap E_t^2, \Omega) \leq \Per_s(E_t^1,\Omega)+ \Per_s(E_t^2,\Omega),\end{equation} 
see~\cite{MR3156889}. Since
$\left(E_t^1\cap E_t^2\right) \cap \Co \Omega =\E_t^2,$ we have that~$E_t^1\cap E_t^2$ is a competitor for~$E_t^2$, hence
$$ 
\Per_s(E_t^2, \Omega) \leq \Per_s(E_t^1\cap E_t^2, \Omega).$$
Plugging this information into~\eqref{fyrfhijedkof45689705}, we find that
\[\Per_s(E_t^1\cup E_t^2, \Omega) \leq \Per_s(E_t^1, \Omega).\]

Now, we have that~$(E_1\cup E_t^2) \cap \Co \Omega= \E_t^1$, 
hence~$E_t^1\cup E_t^2$ is $s$-minimal in~$\Omega$
with respect to the exterior datum~$\E_t^1$. Also, 
\[ \left(E_t^1\cup E_t^2\right)\cap \Omega= E_t^1\cap \Omega,\]
since~$E_t^1$ is of maximum volume. It follows that, up to sets of measure zero, $E_t^2\cap \Omega \subset E_t^1\cap \Omega$, thus contradicting~\eqref{kl11}.
\end{proof}

\begin{appendix}

\section{Continuous functions in the sense of Lebesgue}\label{appe:po8372}

This appendix elaborates on the proof of Theorem~\ref{exlev}. There, in order to obtain the continuity of an $s$-minimal function, we used the fact that, roughly speaking, the superlevel sets at different levels are properly contained one inside the other. We point out that such a property is not related to the $s$-minimality of a function, but rather it is a characterization of continuity. This is made precise in the following result.
Then, in the proof of Theorem~\ref{exlev} the $s$-minimality was used to ensure the validity of~\eqref{eq:bdary_level_sets}, through the strict maximum principle~\cite[Theorem~1.1]{seo23}.
%, where we adopt a proof that follows closely that of Theorem \ref{exlev}. However, we choose to keep the proof of Theorem \ref{exlev} self-contained to benefit the reader.
%and even though the strategy  we give here a proof of the connection between the continuity of a function and the fact that its superlevel sets are compactly contained one inside the other.
%This connection was used in proof of .

\begin{proposition}\label{kk1}
Let~$\Omega$ be a bounded %connected 
open  set. A measurable, locally essentially bounded function~$u\colon \Omega \to \R$ is continuous in~$ \Omega$, more precisely, there exists~$\widetilde u \colon \Omega \to \R$ such that~$\widetilde u = u$ almost everywhere in~$\Omega$ and~$\widetilde u \in C(\Omega)$,
if and only if, for all~$t\neq \tau \in \R$,
\begin{equation}\label{eq:bdary_level_sets} \partial^- \{ u \geq t\} \cap \partial^- \{u\geq \tau\}\cap\Omega =\emptyset.
	\end{equation} 
\end{proposition}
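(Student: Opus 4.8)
The plan is to prove the equivalence in Proposition~\ref{kk1} by establishing both implications, working with the essential liminf/limsup functions $\ell^-$ and $\ell^+$ as in the proof of Theorem~\ref{exlev}. Recall that $u$ admits a continuous representative on $\Omega$ if and only if $\ell^-(x_0) = \ell^+(x_0)$ for every $x_0 \in \Omega$, where $\ell^\pm(x_0) = \lim_{r\searrow 0}\esssup/\essinf_{B_r(x_0)} u$; these are finite by the local essential boundedness hypothesis, and when they agree the common value defines $\widetilde u$, whose continuity is a routine check. So the whole statement reduces to showing: $\ell^-(x_0) < \ell^+(x_0)$ for some $x_0 \in \Omega$ if and only if $\partial^-\{u\ge t\}\cap\partial^-\{u\ge\tau\}\cap\Omega\neq\emptyset$ for some $t\neq\tau$.

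For the forward direction (negation of continuity implies violation of~\eqref{eq:bdary_level_sets}), I would mimic the argument already used inside the proof of Theorem~\ref{exlev}: suppose $\ell^-(x_0) < \ell^+(x_0)$, pick any $t_1 < t_2$ inside the open interval $(\ell^-(x_0), \ell^+(x_0))$, and for each such level $t\in\{t_1,t_2\}$ observe that for every $r>0$ small enough that $B_r(x_0)\subset\Omega$ one has $\essinf_{B_r(x_0)} u < t < \esssup_{B_r(x_0)} u$, hence $0 < |\{u\ge t\}\cap B_r(x_0)| < |B_r(x_0)|$ (if either strict inequality failed, $u\ge t$ a.e.\ or $u<t$ a.e.\ on $B_r(x_0)$, contradicting the above). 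This says precisely $x_0\in\partial^-\{u\ge t\}$ for both $t=t_1$ and $t=t_2$, so $x_0 \in \partial^-\{u\ge t_1\}\cap\partial^-\{u\ge t_2\}\cap\Omega$, and~\eqref{eq:bdary_level_sets} fails.

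For the reverse direction (violation of~\eqref{eq:bdary_level_sets} implies discontinuity), suppose there exist $t < \tau$ (WLOG) and a point $x_0 \in \partial^-\{u\ge t\}\cap\partial^-\{u\ge\tau\}\cap\Omega$. From $x_0\in\partial^-\{u\ge\tau\}$ we get $|\{u\ge\tau\}\cap B_r(x_0)| > 0$ for all small $r$, which forces $\esssup_{B_r(x_0)} u \ge \tau$, hence $\ell^+(x_0)\ge\tau$. From $x_0\in\partial^-\{u\ge t\}$ we get $|B_r(x_0)\setminus\{u\ge t\}| = |\{u< t\}\cap B_r(x_0)| > 0$ for all small $r$, hence $\essinf_{B_r(x_0)} u \le t$ — more carefully, $\essinf_{B_r(x_0)} u < t$ is not immediate, but $\essinf_{B_r(x_0)} u \le t$ holds and combined with $\ell^+(x_0)\ge\tau > t$ we still need strictness of the gap. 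Here one uses that $\{u<t\}$ has positive measure in every ball, so $\ell^-(x_0) = \lim_r \essinf_{B_r(x_0)} u \le t$, while $\ell^+(x_0)\ge\tau$; since $t < \tau$, we conclude $\ell^-(x_0) \le t < \tau \le \ell^+(x_0)$, so $u$ is not continuous at $x_0$.

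The main subtlety — and the step I would be most careful about — is the handedness of strict versus non-strict inequalities in this last paragraph: $x_0 \in \partial^-\{u\ge t\}$ only gives $0 < |\{u\ge t\}\cap B_r| < |B_r|$, which yields $\ell^-(x_0)\le t$ and $\ell^+(x_0)\ge t$, not more; so one genuinely needs the two distinct levels $t\neq\tau$ to pry $\ell^-$ and $\ell^+$ apart, and it is worth spelling out that the role of the two superlevel sets is exactly to certify that $u$ dips below $t$ and rises above $\tau$ in every neighborhood. A secondary routine point is verifying that the function $\widetilde u := \ell^- = \ell^+$ (when they coincide everywhere) is genuinely continuous: this follows because $x\mapsto \essinf_{B_r(x)} u$ is, for fixed $r$, lower semicontinuous-ish with controlled oscillation, and the equality $\ell^-=\ell^+$ upgrades pointwise agreement to uniform local control; I would invoke this without laboring the standard measure-theoretic details, possibly noting that $\widetilde u(x) = \ell^+(x)$ is upper semicontinuous and $\widetilde u(x)=\ell^-(x)$ is lower semicontinuous, hence continuous.
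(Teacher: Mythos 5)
Your proposal is correct and follows essentially the same route as the paper: both directions are handled by comparing $\ell^\pm(x_0)=\lim_{r\searrow0}\esssup/\essinf_{B_r(x_0)}u$ with the levels $t<\tau$, using that membership in $\partial^-\{u\ge t\}$ certifies positive measure of both $\{u\ge t\}$ and $\{u<t\}$ in every small ball. The only cosmetic difference is that you phrase the implications contrapositively and add a remark on the semicontinuity of $\ell^\pm$, which the paper leaves implicit.
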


\begin{proof}
We remark that~$u$ is continuous in the precise sense given by Proposition~\ref{kk1} if and only if 
\[ \essliminf_{x\to x_0}u(x)= \esslimsup_{x\to x_0} u(x)\]
for all~$x_0\in \Omega$. Moreover, since~$u$ is locally essentially bounded, both these limits are well-defined real numbers.

Suppose now that~$u$ is continuous in~$\Omega$, and suppose by contradiction that for some~$t<\tau $ there exists 
 \[ x_0\in  \partial^- \{ u \geq t\} \cap \partial^- \{u\geq \tau\}\cap\Omega .\]
 Then, there exists~$r_0>0$ such that~$B_{r_0}(x_0)\subset\Omega$ and
 \[ |\{u <t\} \cap B_r(x_0)|>0
 \qquad{\mbox{and}}\qquad |\{u \geq \tau\} \cap B_r(x_0)|>0\] 
 for all~$r\in(0,r_0)$. Hence,
 \[ \essinf_{B_r(x_0)} u <t<\tau \le\esssup_{B_r(x_0)}\qquad\mbox{for all }r\in(0,r_0), \] 
 which contradicts the continuity of~$u$.
 
To prove the opposite, suppose by contradiction that~$u$ is not continuous. Then, there exists~$x_0\in  \Omega$ such that
\[  \essliminf_{x\to x_0} u(x) < \esslimsup_{x\to x_0} (x).\] 
For all~$t$ such that 
\[  \essliminf_{x\to x_0} u(x) <t <\esslimsup_{x\to x_0} (x),\] there exists~$r$ small enough such that
\[ \essinf_{B_r(x_0)} u(x) <t<\esssup_{B_r(x_0)} u(x)\] 
which implies that 
\[ x_0\in \partial^-\{u\geq t\}.\] 
Now consider
\[  \essliminf_{x\to x_0} u(x)<t_1<t_2< \esslimsup_{x\to x_0} u(x)\]
and observe that 
\[ x_0\in \partial^-\{u\geq t_1\} \cap \partial^-\{u\geq t_2\},\]
which gives the desired contradiction.  
\end{proof}

%We deal with a technical point used in the proof of Theorem \ref{exlev}.

On a related note, it is convenient to point out the following well-known property of continuous functions, which was used in the end of the proof of Theorem~\ref{exlev}.

\begin{remark}\label{kk2}
Let~$\mathcal O\subset\Rn$ be a bounded connected 
open  set and let~$\varphi \colon \mathcal O\to \R $ be a continuous function. Then, for all $t_1\neq t_2 \in (\inf_{\mathcal O} \varphi,\sup_{\mathcal O}\varphi) $ it holds that
\begin{equation}\label{fio1} \{ \varphi \geq t_1 \}  \neq \{ \varphi \geq t_2 \} ,
\end{equation}
and moreover
\begin{equation}\label{fio2}|\{ \varphi \geq t_1 \} \Delta \{\varphi\geq t_2\}|>0.
\end{equation}
Indeed, since~$\varphi$ is continuous and~$\mathcal O$ is connected, we have that also~$\varphi(\mathcal O)$ is connected. This implies that
\[
\{\varphi=t\}\not=\emptyset\qquad\mbox{for any }t\in \Big(\inf_{\mathcal O} \varphi,\sup_{\mathcal O}\varphi\Big).
\]
Consider now two values~$t_1>t_2$ within~$(\inf_{\mathcal O} \varphi,\sup_{\mathcal O}\varphi)$. Then, we clearly have
\[
\{ \varphi \geq t_1 \}\subset\{ \varphi > t_2 \},
\]
and
\[
\emptyset\not=\{\varphi=t_2\}\subset\big(\{ \varphi \geq t_2 \}\setminus\{\varphi\geq t_1\}\big),
\]
thus \eqref{fio1} is proved. 
Moreover, considering~$\tilde{t}:=(t_1+t_2)/2$ and letting~$x\in\{\varphi=\tilde{t}\}$, by continuity we can find~$\delta>0$ such that~$B_\delta(x)\subset\{ \varphi \geq t_2 \}\setminus\{\varphi\geq t_1\}$, and we obtain the claim.

We also point out that if $\mathcal O$ is not connected, then the thesis does not necessarily hold. One can take for instance $\mathcal O= B_1(x)\cup B_1(y)$ with $|x-y|=3$ and $\varphi=0 $ on $B_1(x)$, $\varphi=3$ on $B_2(y)$. Then $\{\varphi \geq 1\} =\{\varphi\geq 2\}$. 
\end{remark}
\section{Some observations on the proofs of Theorem \ref{LACK}  and Proposition \ref{cont}}\label{apboh}

We recall the set function  $\alpha $, introduced in \cite{asympt1}, as 
\[ \alpha(E_0):=\lim_{s\to 0} s \int_{\Co B_1}\frac{\chi_{E_0}(x)}{|x|^{n+s}} \, dx ,\]
-- whenever such limit exist -- which is significant when dealing with the $s$-perimeter when $s\to 0$. Notice that such a limit may not exist even for smooth sets $E_0$, and this observation led in \cite{sticki} to define
\begin{equation}\label{alphaf} \overline \alpha(E_0)= \limsup_ {s\to 0} s \int_{\Co B_1}\frac{\chi_{E_0}(x)}{|x|^{n+s}} \, dx .
\end{equation} 
In \cite[Theorem 1.7]{sticki}, the authors proved that if $E_0$ does not completely  surround $\Omega$ and if  $\overline \alpha(E_0)$ is strictly smaller than $\omega_n/2$ (which is the function $\overline \alpha$ of the half-space), then for $s$ small enough, the only $s$-minimal set in $\Omega$ with respect to $E_0$ is the empty set. The next result establishes some sort of monotonicity with respect to the exterior data: there is some $s_0:=s_0(n, \Omega, E_0)$ such that for all $s<s_0$, if one considers the $s$-minimal set with respect to any subset of $E_0$, then still $E\cap \Omega=\emptyset$. The proof follows with a careful reading of \cite[Theorem 1.2, Theorem 1.7]{bdlv}, we insert a sketch for the reader's benefit.

\begin{proposition} \label{propb1} Let $\Omega\subset\Rn$ be a bounded and connected open set with $C^2$ boundary and let $E_0\subset \Co \Omega$ be be such that
\[ \overline \alpha(E_0)<\frac{\omega_n}2\] and such that there exists $p \in \partial \Omega$ and $r>0$ with
\[ B_r(p)\setminus \Omega \subset \Co E_0.\]  Then there exists $s_0:= s_0(n, \Omega, E_0)\in (0,1/2),$ such that for all $s\in (0, s_0)$, given any $ E_1\subset E_0$, the $s$-minimal set $E$ for the fractional perimeter with respect to  $ {E_1}$ is empty inside $\Omega$, i.e. 
\[ E\cap \Omega= \emptyset.\] 
\end{proposition}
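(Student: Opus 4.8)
The plan is to reduce the statement to the already-known result \cite[Theorem~1.7]{sticki} (emptiness of the $s$-minimal set when the exterior datum does not surround $\Omega$ and has $\overline\alpha<\omega_n/2$), applied not to $E_1$ directly but to $E_0$, and then to transfer the conclusion from $E_0$ to every subset $E_1\subset E_0$ by a monotonicity/comparison argument at the level of the fractional perimeter. First I would record that, since $E_0$ itself does not completely surround $\Omega$ and $\overline\alpha(E_0)<\omega_n/2$, \cite[Theorem~1.7]{sticki} furnishes some $s_1=s_1(n,\Omega,E_0)\in(0,1/2)$ such that for all $s<s_1$ the $s$-minimal set with exterior datum $E_0$ is empty inside $\Omega$. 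The point is that $s_1$ depends only on $E_0$ (and $\Omega$, $n$), not on $E_1$, so it will serve as the threshold $s_0$ in the statement, possibly after a further reduction below. I would also use here the $C^2$ regularity of $\partial\Omega$ and the hypothesis $B_r(p)\setminus\Omega\subset\Co E_0$ to guarantee that the quantitative ``does not surround'' hypothesis of \cite[Theorem~1.7]{sticki} holds uniformly with a gap, as needed to run the barrier construction there.

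The heart of the argument is then the monotonicity step: if $E_1\subset E_0$ and $E$ is $s$-minimal with respect to $E_1$, I want to conclude $E\cap\Omega=\emptyset$. The natural tool is the sub/supermodularity inequality for $\Per_s$ used already in the proof of Theorem~\ref{cp1}, namely $\Per_s(A\cup B,\Omega)+\Per_s(A\cap B,\Omega)\le\Per_s(A,\Omega)+\Per_s(B,\Omega)$ (see \cite{MR3156889}). Let $G$ be an $s$-minimal set with exterior datum $E_0$, so $G\cap\Omega=\emptyset$, i.e. $G=E_0$ up to null sets. Apply the inequality with $A=E$ and $B=G=E_0$: since $E\cap\Omega\subset\Omega$ and $E_0\subset\Co\Omega$, one has $(E\cap E_0)\cap\Omega=\emptyset$ and $(E\cup E_0)\setminus\Omega=E_0$, while $E\cap\Co\Omega=E_1\subset E_0$. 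Thus $E\cap E_0$ is a competitor against the empty set for the datum... — more precisely, $E\cap E_0$ agrees with $\emptyset$ inside $\Omega$, and comparing with $E$ (whose datum is $E_1\subset E_0$) and with $E_0$ (an $s$-minimal set for its own datum), the submodularity inequality plus the minimality of $E$ for datum $E_1$ forces $\Per_s(E\cup E_0,\Omega)\le\Per_s(E_0,\Omega)$; but $E\cup E_0$ has datum $E_0$ outside $\Omega$, so it is a competitor for the $E_0$-problem, whence $E\cup E_0$ is also $s$-minimal for datum $E_0$. By uniqueness (or by the maximal-volume characterization, as in the proof of Theorem~\ref{cp1}) one gets $(E\cup E_0)\cap\Omega=\emptyset$, hence $E\cap\Omega=\emptyset$.

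The main obstacle I anticipate is bookkeeping the exterior-data constraints correctly so that each set produced ($E\cap E_0$, $E\cup E_0$) is a legitimate competitor for the right problem, and ensuring the uniqueness one needs at the end — the $s$-minimal set with exterior datum $E_0$ need not be unique a priori, so I would either invoke a maximal/minimal $s$-minimal set as in Section~\ref{fgiw4t734sec777} or argue directly that any $s$-minimal set for datum $E_0$ must be empty in $\Omega$ (which is exactly the content of \cite[Theorem~1.7]{sticki}, applied to \emph{every} such minimizer, not just a distinguished one). With that observation the uniqueness issue disappears: $E\cup E_0$ is \emph{some} $s$-minimal set for datum $E_0$, hence empty in $\Omega$ by \cite[Theorem~1.7]{sticki} for $s<s_1$, and we may simply set $s_0:=s_1$. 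A secondary technical point is checking that the submodularity inequality \cite{MR3156889} is available in the generality needed (finite fractional perimeter in $\Omega$, possibly unbounded sets), which holds here because all sets involved coincide with fixed bounded-in-$\Omega$ data outside $\Omega$; I would cite this exactly as in the proof of Theorem~\ref{cp1}. I would close the sketch by noting that, as claimed in the paragraph preceding the proposition, all of this is ``a careful reading'' of \cite[Theorem~1.2, Theorem~1.7]{bdlv} together with \cite[Theorem~1.7]{sticki}.
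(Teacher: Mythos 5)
Your argument is correct, but it takes a genuinely different route from the paper's. The paper does not invoke \cite[Theorem~1.7]{sticki} as a black box: it re-runs the quantitative core of that proof, bounding from below the fractional mean curvature $H_s^\rho[E](x)$ at any point of $\partial E\cap\overline{\Omega}$ admitting an exterior tangent ball of radius $\delta_{s_0}$, and observing that the only place the exterior datum enters the estimate (the far-field term $\mathcal J_s$) is monotone in the datum, so that $\chi_{E_1}\le\chi_{E_0}$ makes the threshold $s_0$ uniform over all $E_1\subset E_0$; the conclusion then follows by sliding a ball from $B_r(p)\setminus\Omega$ into $\Omega$ and contradicting $H_s[E]=0$ at the first contact point. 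You instead apply \cite[Theorem~1.7]{sticki} only to $E_0$ and transfer the conclusion to every $E_1\subset E_0$ by submodularity: since $(E\cap E_0)\setminus\Omega=E_1$ and $(E\cap E_0)\cap\Omega=\emptyset$, minimality of $E$ gives $\Per_s(E,\Omega)\le\Per_s(E\cap E_0,\Omega)$, hence $\Per_s(E\cup E_0,\Omega)\le\Per_s(E_0,\Omega)$; as $E_0$ is itself $s$-minimal for its own datum, $E\cup E_0$ is a minimizer for the datum $E_0$ and is therefore empty in $\Omega$, i.e. $E\cap\Omega=\emptyset$. Your closing observation --- that one should apply the emptiness statement to \emph{every} minimizer for the datum $E_0$ rather than fret about uniqueness --- correctly resolves the only wobble in the middle of your sketch, and your threshold depends only on $(n,\Omega,E_0)$ as required. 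The trade-off: your argument is shorter and leans on submodularity (already used in the proof of Theorem~\ref{cp1}) plus the precise formulation of \cite[Theorem~1.7]{sticki}, while the paper's version is quantitative and exhibits the sliding-ball mechanism and the explicit radius $\delta_{s_0}$ directly.
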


\begin{proof} We let 
\[ R>2 \max\{1, \mbox{diam} (\Omega)\},\] 
and using \cite[Proposition 2.1]{sticki}, we first notice that
\bgs{\lim\inf_{s\to 0} \left({\omega_n} R^{-s} -2 s\sup_{x\in \overline \Omega}\int_{\Co B_R(x)}\frac{\chi_{E_0}(y)}{|x-y|^{n+s}} \, dy \right)= \omega_n - 2\overline \alpha(E_0):= 4\beta.}
Then there is some $s'=s'(n,E_0)$ such that for all $s<s'$,
\eqlab{ \label{fff} \frac{7\beta}2 \leq {\omega_n} R^{-s} -2 s\sup_{x\in \overline \Omega}\int_{\Co B_R(x)}\frac{\chi_{E_0}(y)}{|x-y|^{n+s}} \, dy \leq {\omega_n} R^{-s} -2 s\sup_{x\in \overline \Omega}\int_{\Co B_R(x)}\frac{\chi_{E_1}(y)}{|x-y|^{n+s}} \, dy. }
We prove at first that there is some $ s_0:=s_0(n,\Omega, E_0) \in (0, 1/2)$ such that if any set $E$ that coincides with $E_1\subset E_0$ outside of $\Omega$ has a tangent exterior ball of radius at least
\[ \delta_{ s_0}:= e^{-\frac{1}{ s_0} \log \frac{\omega_n+2\beta}{\omega_n+\beta}} \]
at some point $x\in \partial E \cap \overline \Omega$, then for all $s\in (0,s_0)$,
\eqlab{\label{boh1} \liminf_{\rho \to 0} H^\rho_s[E](x) \geq \frac{\beta}{s}.}
Using \eqref{mc} we write for $\rho>0$ small,
\[ H_s^\rho[E](x)= \int_{B_R(x) \setminus B_\rho(x)}\frac{\chi_{\Co E}(y)-\chi_E(y)}{|x-y|^{n+s}} \, dy +\int_{\Co B_R(x) } \frac{\chi_{\Co E}(y)-\chi_E(y)}{|x-y|^{n+s}} \, dy :=\mathcal I_s[E](x)+\mathcal J_s[E](x) .\] 
 With the same estimates as in the proof of \cite[Theorem 1.2]{sticki} -- see equation \cite[(3.3)]{sticki}, we have that there is $C_0:=C_0(n)>0$ such that on the one hand
\bgs{ \mathcal I_s[E](x)\geq \frac{1}{s}\left( \omega_n R^{-s}  -\omega_n \delta^{-s} -\frac{C_0s}{1-s}\delta^{-s}
%- 2 s  \sup_{x\in \overline \Omega} \int_{\Co B_R(x)} \frac{\chi_E(y)}{|x-y|^{n+s}}\, dy
\right)
,} 
and on the other hand
\bgs{ \mathcal J_s[E](x)\geq  \frac{\omega_n}s R^{-s} -2 \sup_{x\in \overline \Omega}\int_{\Co B_R(x)}\frac{\chi_{E_1}(y)}{|x-y|^{n+s}} \, dy .}
Using \eqref{fff} and that,  as $s\to 0$,
\[ R^{-s} \nearrow 1, \]
 we find that there is some $s_0:=s_0(n,\Omega,E_0)<s'$  such that for all $s\in (0, s_0)$ and all $\delta \geq \delta_{s_0}$,
\[ \omega_n R^{-s} \geq  \omega_n - \frac{\beta}2, \qquad  \delta^{-s} \leq \delta_{s_0}^{-s_0}=\frac{\omega_n +2\beta}{\omega_n+\beta}, \qquad\frac{C_0s}{1-s}\leq\beta, 
\]
%and
and we obtain \eqref{boh1}. We reason by contradiction and suppose there is some boundary of $E$ inside $\Omega$.  We continue with the strategy of \cite[Theorem 1.7]{sticki}, without providing full technical details.  Notice at first that 
\[ B_r(p)\setminus \Omega \subset \Co E_1.\] We consider $s_0$ eventually smaller such that we can take a ball of radius at least $\delta_{s_0}$, tangent to $\partial \Omega$ at $p$ and contained in  $B_r(p)\setminus \Omega$. We  slide this ball along the interior normal to $\partial \Omega$ at $p$, until we first encounter $\partial E$. At the first contact point $x\in \partial E \cap  \overline \Omega$, we have the Euler-Lagrange equation
\[ H_s[E](x)=0,\]
which gives a contradiction to \eqref{boh1}. Once the ball, moved along the normal, is contained in $\Omega$, we can "move it around" all $\Omega$, excluding any contact point between the ball and $\partial E\cap \overline \Omega$.  
\end{proof}

\end{appendix}
\bibliography{biblio}

\begin{thebibliography}{10}

\bibitem{bootstrap}
Bego{\~{n}}a {Barrios}, Alessio {Figalli}, and Enrico {Valdinoci}.
\newblock {Bootstrap regularity for integro-differential operators and its
  application to nonlocal minimal surfaces.}
\newblock {\em {Ann. Sc. Norm. Super. Pisa, Cl. Sci. (5)}}, 13(3):609--639,
  2014.

\bibitem{BBM}
Jean Bourgain, Ha{\"{\i}}m Brezis, and Petru Mironescu.
\newblock Limiting embedding theorems for {$W^{s,p}$} when {$s\uparrow1$} and
  applications.
\newblock {\em J. Anal. Math.}, 87:77--101, 2002.
\newblock Dedicated to the memory of Thomas H. Wolff.

\bibitem{bdlvm}
Claudia Bucur, Serena Dipierro, Luca Lombardini, Jos\'{e}~M. Maz\'{o}n, and
  Enrico Valdinoci.
\newblock {$(s, p)$}-harmonic approximation of functions of least
  {$W^{s,1}$}-seminorm.
\newblock {\em Int. Math. Res. Not. IMRN}, (2):1173--1235, 2023.

\bibitem{bdlv}
Claudia Bucur, Serena Dipierro, Luca Lombardini, and Enrico Valdinoci.
\newblock Minimisers of a fractional seminorm and nonlocal minimal surfaces.
\newblock {\em Interfaces Free Bound.}, 22(4):465--504, 2020.

\bibitem{sticki}
Claudia Bucur, Luca Lombardini, and Enrico Valdinoci.
\newblock Complete stickiness of nonlocal minimal surfaces for small values of
  the fractional parameter.
\newblock {\em Ann. Inst. H. Poincar\'{e} Anal. Non Lin\'{e}aire},
  36(3):655--703, 2019.

\bibitem{obss}
Luis Caffarelli, Daniela De~Silva, and Ovidiu Savin.
\newblock Obstacle-type problems for minimal surfaces.
\newblock {\em Comm. Partial Differential Equations}, 41(8):1303--1323, 2016.

\bibitem{nms}
Luis Caffarelli, Jean-Michel Roquejoffre, and Ovidiu Savin.
\newblock Nonlocal minimal surfaces.
\newblock {\em Comm. Pure Appl. Math.}, 63(9):1111--1144, 2010.

\bibitem{MR3156889}
Antonin Chambolle, Massimiliano Morini, and Marcello Ponsiglione.
\newblock Minimizing movements and level set approaches to nonlocal variational
  geometric flows.
\newblock In {\em Geometric partial differential equations}, volume~15 of {\em
  CRM Series}, pages 93--104. Ed. Norm., Pisa, 2013.

\bibitem{mattheorem}
Matteo Cozzi.
\newblock On the variation of the fractional mean curvature under the effect of
  {$C^{1,\alpha}$} perturbations.
\newblock {\em Discrete Contin. Dyn. Syst.}, 35(12):5769--5786, 2015.

\bibitem{teolu}
Matteo Cozzi and Luca Lombardini.
\newblock On nonlocal minimal graphs.
\newblock {\em Calc. Var. Partial Differential Equations}, 60(4):Paper No. 136,
  72, 2021.

\bibitem{davila}
Juan D{\'a}vila.
\newblock On an open question about functions of bounded variation.
\newblock {\em Calc. Var. Partial Differential Equations}, 15(4):519--527,
  2002.

\bibitem{asympt1}
Serena {Dipierro}, Alessio {Figalli}, Giampiero {Palatucci}, and Enrico
  {Valdinoci}.
\newblock {Asymptotics of the $s$-perimeter as $s \searrow 0$.}
\newblock {\em {Discrete Contin. Dyn. Syst.}}, 33(7):2777--2790, 2013.

\bibitem{graph}
Serena Dipierro, Ovidiu Savin, and Enrico Valdinoci.
\newblock Graph properties for nonlocal minimal surfaces.
\newblock {\em Calc. Var. Partial Differential Equations}, 55(4):Paper No. 86,
  25, 2016.

\bibitem{boundary}
Serena Dipierro, Ovidiu Savin, and Enrico Valdinoci.
\newblock Boundary behavior of nonlocal minimal surfaces.
\newblock {\em J. Funct. Anal.}, 272(5):1791--1851, 2017.

\bibitem{stickscrelle}
Serena Dipierro, Ovidiu Savin, and Enrico Valdinoci.
\newblock Boundary properties of fractional objects: flexibility of linear
  equations and rigidity of minimal graphs.
\newblock {\em J. Reine Angew. Math.}, 769:121--164, 2020.

\bibitem{seograph}
Serena Dipierro, Ovidiu Savin, and Enrico Valdinoci.
\newblock Nonlocal minimal graphs in the plane are generically sticky.
\newblock {\em Comm. Math. Phys.}, 376(3):2005--2063, 2020.

\bibitem{seo23}
Serena Dipierro, Ovidiu Savin, and Enrico Valdinoci.
\newblock A strict maximum principle for nonlocal minimal surfaces.
\newblock {\em arXiv preprint arXiv:2308.01697}, 2023.

\bibitem{J}
Petri Juutinen.
\newblock {$p$}-{H}armonic approximation of functions of least gradient.
\newblock {\em Indiana Univ. Math. J.}, 54(4):1015--1029, 2005.

\bibitem{mazcl}
Jos\'{e}~M. Maz\'{o}n, Julio~D. Rossi, and Sergio Segura~de Le\'{o}n.
\newblock Functions of least gradient and 1-harmonic functions.
\newblock {\em Indiana Univ. Math. J.}, 63(4):1067--1084, 2014.

\bibitem{SWZ}
Peter Sternberg, Graham Williams, and William~P. Ziemer.
\newblock Existence, uniqueness, and regularity for functions of least
  gradient.
\newblock {\em J. Reine Angew. Math.}, 430:35--60, 1992.

\bibitem{MR1246349}
Peter Sternberg and William~P. Ziemer.
\newblock The {D}irichlet problem for functions of least gradient.
\newblock In {\em Degenerate diffusions ({M}inneapolis, {MN}, 1991)}, volume~47
  of {\em IMA Vol. Math. Appl.}, pages 197--214. Springer, New York, 1993.

\end{thebibliography}
\bibliographystyle{plain}

\end{document}